\documentclass[12pt]{article}
\usepackage{latexsym,amsfonts,amssymb,mathrsfs,float}
\usepackage{dsfont}
\usepackage{amsthm}
\usepackage[numbers]{natbib}
\usepackage{amsmath}
\usepackage{indentfirst}
\usepackage{bbm}

\usepackage{color}
\allowdisplaybreaks

\newtheorem{thm}{Theorem}[section]

\newtheorem{lem}[thm]{Lemma}
\newtheorem{pro}[thm]{Proposition}

\newtheorem{rem}[thm]{Remark}

\numberwithin{equation}{section}

\date{}

\begin{document}

\title{\bf Laws of the iterated logarithm for the range of once-reinforced random walk on $\mathbb{Z}$}

\author{Ze-Chun Hu$^{1}$, Renming Song$^{2}$ and Li Wang$^{1,}$\footnote{ Corresponding author}\\ \\
  {\small $^1$ College of Mathematics, Sichuan University,
Chengdu, 610065, China}\\
 {\small zchu@scu.edu.cn; wangli0@stu.scu.edu.cn}\\ \\
  {\small $^2$ Department of Mathematics,
University of Illinois Urbana-Champaign, Urbana, IL 61801, USA}\\
 {\small rsong@illinois.edu}}
\maketitle

\begin{abstract}

 In this paper, we study once-reinforced random walk
 on $\mathbb{Z}$, and prove sharp Chung-type and 
 Hartman-Wintner laws of the iterated logarithm for the range. 
 Let $R_n$ denote the range at time $n$ and let $c>0$  be the reinforcement parameter. Then, almost surely,
\begin{equation*}
\liminf_{n\to\infty}\frac{R_n}{\sqrt{n/\log\log n}}=\frac{\pi}{\sqrt{2}},
\qquad
\limsup_{n\to\infty}\frac{cR_n}{\sqrt{2n\log\log n}}=1.
\end{equation*}
 The proofs are based on the independent increment decomposition of the inverse range and sharp generating function estimates.

\end{abstract}

\smallskip

\noindent {\bf Keywords:} once-reinforced random walk, range, law of the iterated logarithm (LIL), Chung-type LIL, Hartman-Wintner LIL.

\smallskip

\noindent {\bf 2020 Mathematics Subject Classification:} 
60K35

\section{Introduction and main results}\label{sec-1}

Once-reinforced random walk (ORRW) was introduced by Davis \cite{Davis1990}.
It is a self-interacting random walk in which
each edge has weight one before its first traversal and weight $c>0$ afterwards. At each step the walk chooses an
incident edge with probability proportional to its current weight. In this paper we study laws of 
the iterated logarithm  (LIL) 
for the range of ORRW on $\mathbb Z$.
 This is a question
about fine range fluctuations, whereas much of the existing work on ORRW concerns recurrence and transience
(see,~e.g., \cite{CKS2020, CT2025, DKL2002, EK2026, KSS2021, KS2018, Vervoort2002} and the references therein).

Another classical model is the linearly edge-reinforced random walk (LERRW), 
introduced by Coppersmith and Diaconis \cite{CD1986}, in which the weight of an edge increases linearly with its number of traversals.
LERRW on finite graphs was studied in \cite{CD1986, ST2015}.
LERRW has also been studied on trees, two dimensional graphs, and $\mathbb Z^d$ for $d\ge3$
(see,~e.g., \cite{Pemantle1988, MR2009, DST2015}).
LERRW contrasts with ORRW, where each edge is reinforced only once. In dimension one,
this structure leads to the inverse range decomposition used throughout
the paper.

Let $X=(X_n)_{n\ge 0}$ be an ORRW on $\mathbb{Z}$ with reinforcement parameter $c>0$, started from $X_0=0$. We write
\begin{equation*}
M_n:=\max_{0\le j\le n}X_j,\qquad
m_n:=\min_{0\le j\le n}X_j,\qquad
\mathcal F_n:=\sigma(X_0,\dots,X_n).
\end{equation*}
The first step is symmetric, that is, $\mathbb P(X_1=1)=\mathbb P(X_1=-1)=1/2$. For $n\ge 1$, we have
\begin{eqnarray*}
\mathbb{P}(X_{n+1}=X_n+1\mid \mathcal F_n)= 1-\mathbb{P}(X_{n+1}=X_n-1\mid \mathcal F_n)=
\left\{
\begin{array}{cl}
\frac{1}{1+c} & \text{if}\ X_n=M_n,\\
\,\frac{1}{2} & \text{if}\ m_n<X_n<M_n,\\
\frac{c}{1+c} & \text{if}\ X_n=m_n.
\end{array}
\right.
\end{eqnarray*}
The cases $c>1$, $c=1$, and $0<c<1$
correspond, respectively, to the self attractive (reinforced), simple symmetric, and self repulsive regimes.
After the first step, the reinforcement affects the walk only at the two boundary points of the current range, while
inside the range the walk is symmetric. The growth
of the range is therefore governed by the successive times at which the walk pushes one of the two extrema outward. We
set
\begin{equation}\label{1.1}
R_n:=M_n-m_n=M_n+|m_n|.
\end{equation}
Then $R_n$ is the length of the visited interval, and  $R_n+1$ is the number of visited sites. The process $R_n$
increases only when the walk discovers a new site. We shall also use the inverse range
\begin{equation}\label{1.2}
S_k:=\inf\{n\ge 0:R_n=k\}
\end{equation}
as a basic tool in this paper.
It records the successive enlargements of the visited interval. Throughout the paper,
$\log$ denotes the natural logarithm and $\mathbb Z_+:=\{0,1,2,\dots\}$.

Serlet \cite{Serlet2018} studied a more general perturbed random walk on $\mathbb Z$ with parameters
$\beta,\gamma>0$. This walk is symmetric in the interior of its current range, whereas at its current maximum and minimum
it jumps outward with probabilities $1/(1+\beta)$ and $1/(1+\gamma)$ respectively. The symmetric special case $\beta=\gamma=c$ is precisely the ORRW. In \cite[Proposition~8]{Serlet2018}, Serlet obtained a scaling limit for the number $V_n$ of distinct sites visited by time $n$. More precisely, since $V_n=R_n+1$, his result implies that, as $n\to\infty$, the process $\left(R_{\lfloor nt\rfloor}/\sqrt n\right)_{t\ge0}$ converges in law to the process
$\left(\sup_{0\le s\le t}W_s-\inf_{0\le s\le t}W_s\right)_{t\ge0}$,
where $W=(W_t)_{t\ge0}$ is the unique solution of the perturbed Brownian equation in
\cite[Equation~(1)]{Serlet2018}. In the symmetric case $\beta=\gamma=c$, this equation becomes
\begin{equation*}
W_t=B_t-(c-1)\sup_{0\le s\le t}W_s-(c-1)\inf_{0\le s\le t}W_s,
\qquad t\ge0,
\end{equation*}
where $B=(B_t)_{t\ge0}$ is a standard Brownian motion.

The limiting process $W$ is a doubly perturbed Brownian motion. 
Chaumont, Doney and Hu \cite{CDH2000} established a Hartman-Wintner LIL for the running supremum of $W$ and a Chung-type LIL  for the running maximum of  $|W|$.
However, these continuum laws and the weak
invariance principle in \cite{Serlet2018} do not directly imply the almost sure laws for the discrete range studied in this paper. 

Pfaffelhuber and Stiefel \cite{PS2021} studied the range of ORRW on $\mathbb Z$. Using Tauberian theory, they proved that $\mathbb E[R_n^\ell]\sim C_\ell(c)n^{\ell/2}$ for every integer $\ell\ge 1$, with
explicit constants $C_\ell(c)$. Collevecchio and Tarr\`es \cite{CT2025}
developed a change-of-measure formula for ORRW on general graphs and obtained large-deviation upper bounds for the probability that
its range
on $\mathbb Z^d$ with $d\ge 2$ has
cardinality at most $un^{d/(d+2)}$.
In ~\cite{HMSW2026}, we (with Ting Ma) obtained asymptotic formulas for all moments of the range of ORRW on
$\mathbb Z_+$.
The moment orders of \cite{HMSW2026}
coincide with those obtained by Pfaffelhuber and Stiefel \cite{PS2021}, whereas the
asymptotic constants differ.

Serlet \cite{Serlet2013} studied the ORRW reflected at $0$ on $\mathbb Z_+$ and proved
a Chung-type LIL in the self attractive regime $c\ge 1$ (see \cite[Theorem 5]{Serlet2013}). He also obtained a
corresponding Hartman-Wintner upper bound under the additional restriction $c>5/4$ (see
\cite[Proposition 6]{Serlet2013}). Collevecchio et al.~\cite[Theorem~1.1]{CIT2025} sharpened Serlet's upper bound and proved the matching lower bound,
thereby establishing a Hartman-Wintner-type LIL 
for the position process for all $c>0$. The half-line results provide a useful comparison, but they do not directly yield LILs 
for the range on $\mathbb Z$. Indeed, on $\mathbb Z$, $R_n=M_n-m_n$ is determined jointly by the maximum $M_n$ and the
minimum $m_n$, whereas on $\mathbb Z_+$ the lower endpoint is fixed at $0$ and only the upper endpoint of the visited
interval can expand.

The aim of this paper is to establish Chung-type and Hartman-Wintner-type LILs 
for the range of ORRW on $\mathbb Z$. We now state the two main results, beginning with the Chung-type LIL.

\begin{thm}\label{thm-1.1}
Let $R_n$ be the range of the ORRW
on $\mathbb Z$ with reinforcement parameter $c>0$, as defined
in \eqref{1.1}. Then, almost surely,
\begin{equation*}
\liminf_{n\to\infty}\frac{R_n}{\sqrt{n/\log\log n}}
=\frac{\pi}{\sqrt{2}}.
\end{equation*}
\end{thm}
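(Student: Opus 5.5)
The plan is to reduce everything to sharp two-sided tail estimates for the inverse range $S_k$ from \eqref{1.2}, using the identity $\{R_n< k\}=\{S_k>n\}$, and then to run a Borel--Cantelli argument along a geometric and a super-geometric subsequence.

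\textbf{Decomposition of $S_k$.} Write $S_k=\sum_{j=1}^{k}T_j$ with $T_j:=S_j-S_{j-1}$. At time $S_{j-1}$ the walk sits at an endpoint of a visited interval of length $j-1$. Until the next extension, its behaviour depends only on $j$ and $c$, and it is symmetric under reflection. From an endpoint it extends the range with probability $1/(1+c)$. Otherwise it steps inside and performs a simple random walk until it hits one of the two endpoints, hitting the far endpoint with probability $1/(j-1)$, and then the situation repeats. By the strong Markov property the $T_j$ are independent. $T_j$ is a geometric sum of i.i.d.\ interior excursions, each of which is a simple random walk started at distance $1$ from the boundary of an interval of length $j-1$ and killed on exiting it.

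This gives an explicit formula for $\mathbb E[s^{T_j}]$ in terms of the classical exit generating functions of simple random walk on an interval, that is, ratios of $\sinh/\sin$-type expressions in $\theta$ with $\cos\theta=1/s$. The singularity of this function closest to the origin sits at $s_j$ with $\log s_j=\frac{\pi^2}{2j^2}(1+o(1))$, which is the principal Dirichlet eigenvalue of the interval; it is independent of $c$, which explains why the constant $\pi/\sqrt2$ does not involve $c$. I also expect the residue there to be only polynomial in $j$ and $c$.

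\textbf{Sharp tail of $S_k$.} I want to show that, for $t/k^2\to\infty$,
\begin{equation*}
\log \mathbb P(S_k>t)=-\frac{\pi^2 t}{2k^2}\bigl(1+o(1)\bigr)+O(\log k).
\end{equation*}
For the upper bound I will use Chernoff's inequality with $\lambda=(1-\varepsilon)\pi^2/(2k^2)$. Since $\lambda$ lies below every singularity $\log s_j$ for $j\le k$, one needs $\sum_{j\le k}\log\mathbb E[e^{\lambda T_j}]\le C_\varepsilon\log k$ or similar. This follows from expanding each generating function: for $j\le(1-\varepsilon')k$ the terms are $O(\lambda j)$-small after accounting for the $O(j)$ mean, and the last few $j$ contribute boundedly.

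For the lower bound I will use $S_k\ge T_k$ together with an explicit strategy. The walk first reaches the middle of the interval, which costs probability $\gtrsim 1/k$, and then stays inside for time $t$, which has probability $\ge c_0 e^{-\pi^2 t/(2k^2)(1+\varepsilon)}$ by the eigenfunction expansion.

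\textbf{Borel--Cantelli.} For the lower bound of the liminf, fix $a<\pi/\sqrt2$, set $n_m=\lfloor\theta^m\rfloor$ and $x_m=a\sqrt{n_{m+1}/\log\log n_{m+1}}$. By monotonicity, the event that $R_n\le a\sqrt{n/\log\log n}$ for some $n\in[n_m,n_{m+1}]$ is contained in $\{S_{x_m+1}>n_m\}$. The tail estimate bounds its probability by $(\log n_m)^{-\pi^2/(2a^2\theta)(1+o(1))}$, up to polynomial factors in $x_m$ that are absorbed. This is summable once $\theta$ is close to $1$, so the liminf is $\ge a$.

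For the upper bound, fix $a>\pi/\sqrt2$ and take $n_m=\exp(m^{1+\delta})$, so that $n_{m-1}=o(n_m/\log\log n_m)$. Condition on $\mathcal F_{n_{m-1}}$. By the law of the iterated logarithm upper bound, or crudely, $R_{n_{m-1}}\le n_{m-1}^{1/2+\epsilon}\ll x_m:=a\sqrt{n_m/\log\log n_m}$. From any configuration whose range is smaller than $x_m/2$, the walk can in a bounded number of steps be brought near the centre of an interval of length $x_m$ containing the current range. It then stays in that interval for time $n_m$ with conditional probability $\ge (\log n_m)^{-\pi^2/(2a^2)(1+\varepsilon)}$. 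This is of order $m^{-(1+\delta)\pi^2/(2a^2)(1+\varepsilon)}$, which is non-summable for small $\delta,\varepsilon$. L\'evy's conditional Borel--Cantelli lemma then gives $R_{n_m}\le x_m$ infinitely often.

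\textbf{Main obstacle.} The hard part is the sharp tail estimate, specifically its uniformity. The upper bound needs the Chernoff sum over all $j\le k$ controlled to $O(\log k)$ right up to the critical exponent. The lower bound in the conditional step must hold uniformly over starting configurations, with the walk at an endpoint, for any small initial range, and any $c>0$; in particular the boundary steps with bias $c$ must cost only sub-polynomial factors. I would first establish the explicit generating function and a uniform residue bound near $s_j$, since both directions follow from that.
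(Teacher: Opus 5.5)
\textbf{Overall.} Your architecture is the right one: independent increments $T_j$, a Chernoff bound along $\lfloor\theta^m\rfloor$, and a second Borel--Cantelli argument for the other half. But the bookkeeping of prefactors, which is where the difficulty lies, goes wrong in both directions, and in the second direction the key estimate is missing.

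\textbf{The upper tail (the bound $\liminf\ge\pi/\sqrt2$): repairable.} In the regime you use, $t/k^2\asymp\log\log k$, so the main term $\pi^2t/(2k^2)$ is only of order $\log\log k$ and an additive $O(\log k)$ error swamps it. Likewise, along $n_m=\lfloor\theta^m\rfloor$ a factor polynomial in $x_m$ is $e^{\Theta(m)}$ against a main term $m^{-p}$, so it is not absorbed. What you need, and what is true, is
\[
\sup_{k\ge k_0}\mathbb E\bigl[(\sec(\mu/k))^{S_k}\bigr]<\infty\qquad\text{for each fixed }\mu<\pi,
\]
which is Lemma~\ref{lem-3.3}. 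Each factor contributes $-\log\bigl(1-c\tan(\mu/k)\tan(i\mu/2k)\bigr)=O(1/k)$ because $\tan(\mu/2)<\infty$, so the Chernoff sum is $O(1)$, not merely $O(\log k)$. Your sketch essentially gives this, so this half works once the target is corrected.

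\textbf{The lower bound on confinement (the bound $\liminf\le\pi/\sqrt2$): a genuine gap.} You assert, without proof, that from the centre of an interval of length $x_m$ the reinforced walk keeps its range inside for time $n_m$ with conditional probability $\ge(\log n_m)^{-(1+\varepsilon)\pi^2/(2a^2)}$.
\begin{itemize}
\item This cannot be read off from simple random walk. Before its range reaches size $\approx x_m$, the walk meets its moving extremes order $x_m$ times, each time with $c$-biased steps.
\item The tool you propose instead, a residue bound for $\mathbb E[s^{T_j}]$, controls one increment. That increment starts next to an endpoint, so its tail carries an intrinsic factor $\asymp 1/j$; compare Lemma~\ref{lem-3.4}, $\mathbb P(\tau_i>2m)\ge\frac2i\cos^{2m}(\pi/i)$. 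With $j\approx x_m$ and $n_m=\exp(m^{1+\delta})$ the resulting series converges, and Borel--Cantelli yields nothing.
\end{itemize}

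\textbf{The missing idea.} Use the independence of the $T_i$ across many levels, so that the $1/i$ is paid for by the number of levels. The paper takes $F_i=\{T_i\ge Ai^2\log\log i\}$ for every $i$. These events are independent with
\[
\mathbb P(F_i)\ge C\,i^{-1}(\log i)^{-A\pi^2/2-\epsilon},
\]
which is non-summable for $A<2/\pi^2$. This gives $\limsup_k S_k/(k^2\log\log k)\ge2/\pi^2$ with no sparse sequence, no conditioning, and no a priori bound on $R_{n_{m-1}}$.

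Within your own scheme, the fix is to take the union of $\{T_j>n_m\}$ over $j\in[(1-\eta)x_m,x_m)$. This event has probability $\gtrsim\eta(\log n_m)^{-\pi^2/(2a^2(1-\eta)^2)}$ and is independent over $m$.

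\textbf{A smaller slip.} The ``crude'' bound $R_{n_{m-1}}\le n_{m-1}^{1/2+\epsilon}$ is not $\ll x_m$ for your sequence, because $\log n_{m-1}/\log n_m\to1$. Only a Hartman--Wintner-type upper bound on $R_n$ would work there.
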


\begin{rem}\label{rem-1.1}
Serlet \cite{Serlet2013} uses the parametrization
\begin{equation*}
\beta:=\frac{1+r}{1-r},\qquad \frac{1-r}{2}=\frac{1}{1+\beta}.
\end{equation*}
Thus the self attractive regime $r\in[0,1)$ in \cite[Theorem 5]{Serlet2013} corresponds to $\beta\in[1,\infty)$. The
boundary transition probability in \cite{Serlet2013} is therefore $1/(1+\beta)$, the same form as $1/(1+c)$ in the
present paper, after identifying $\beta$ with $c$. In Serlet's Chung-type LIL for the perturbed reflecting random walk on
$\mathbb Z_+$, the sharp constant is $\pi/\sqrt{8}$ and is independent of the reinforcement parameter. Theorem
\ref{thm-1.1} establishes the corresponding range result on $\mathbb Z$, with constant
$\pi/\sqrt{2}=2(\pi/\sqrt{8})$, which is also independent of $c$.
\end{rem}

The second result is the Hartman-Wintner LIL for the range.

\begin{thm}\label{thm-1.2}
Let $R_n$ be the range of the ORRW
on $\mathbb Z$ with reinforcement parameter $c>0$, as defined
in \eqref{1.1}. Then, almost surely,
\begin{equation*}
\limsup_{n\to\infty}\frac{cR_n}{\sqrt{2n\log\log n}}=1.
\end{equation*}
\end{thm}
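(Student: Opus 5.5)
The plan is to work entirely with the inverse range $S_k$ from \eqref{1.2} and to turn the statement into a lower-deviation law for $S_k$. Because $R$ is nondecreasing with unit jumps, $\{R_n\ge k\}=\{S_k\le n\}$. The first step is the independent increment decomposition: put $\tau_k:=S_k-S_{k-1}$. At time $S_{k-1}$ the walk sits at an endpoint of a visited interval of length $k-1$. By the strong Markov property, and by the symmetry between the two endpoints, $\tau_k$ is independent of $\mathcal F_{S_{k-1}}$, and its law depends only on $k$. That law is explicit. From an endpoint the walk steps outward with probability $1/(1+c)$. Otherwise it steps inward and performs a simple random walk excursion inside the interval until it reaches one of the two endpoints, and the same rule applies there. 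So $\tau_k$ is one plus a geometric number, with failure probability $c/(1+c)$, of i.i.d. pairs (one inward step plus a gambler's-ruin hitting time started one site inside $[0,k-1]$). This gives a closed form for $\mathbb E e^{-\theta\tau_k}$ in terms of $\cosh/\sinh$ of $\operatorname{arccosh}(e^{\theta})$ times the interval length. It also gives $\mathbb E\tau_k\approx ck$, hence $S_k\approx ck^2/2$.

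The key analytic step is a sharp estimate of the Laplace transform of $S_k$ in the regime that matters here, namely $\theta k^2\to\infty$ with $\theta\to0$. Writing $\varphi$ for the Laplace transform of the excursion length, the generating function of $\tau_k$ is $\frac{1}{1+c}\big/\big(1-\frac{c}{1+c}\varphi\big)$. One has $1-\varphi\approx\sqrt{2\theta}$ once the interval length exceeds $\theta^{-1/2}$, and $1-\varphi\approx\theta k$ for shorter intervals. I therefore expect
\[
\log\mathbb E e^{-\theta S_k}=-\sum_{j\le k}\log\bigl(1+c(1-\varphi_j(\theta))\bigr)=-ck\sqrt{2\theta}\,(1+o(1))
\]
uniformly in this regime. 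An exponential Chebyshev bound optimized in $\theta$ then gives the upper estimate
\[
\mathbb P(S_k\le x)\le\exp\Bigl(-(1-\varepsilon)\frac{c^2k^2}{2x}\Bigr).
\]
For the matching lower estimate $\mathbb P(S_k\le x)\ge\exp\bigl(-(1+\varepsilon)c^2k^2/(2x)\bigr)$ I would use exponential tilting. Change measure by $e^{-\theta S_k}/\mathbb E e^{-\theta S_k}$ at the optimal $\theta=c^2k^2/(2x^2)$, check that under the tilted law $S_k$ concentrates near $x$ via a variance computation from the same generating function, and then undo the tilt. I expect this two-sided, sharp-constant lower-tail asymptotic, uniform in $k$ and $x$ with $x/k^2\to0$ slowly, to be the main obstacle. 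The delicate part is controlling the $j$'s near the crossover $j\approx\theta^{-1/2}$ and the error terms in $\operatorname{arccosh}$.

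With $x=x_k:=(1\mp\delta)c^2k^2/(2\log\log k)$, the probability becomes $(\log k)^{-(1\pm\delta')}$. For the upper bound in Theorem \ref{thm-1.2}, take $k_j=\lfloor\rho^j\rfloor$ with $\rho>1$ close to $1$. The bound $\sum_j\mathbb P(S_{k_j}\le x_{k_j})<\infty$ and Borel--Cantelli then show that eventually $S_{k_j}>x_{k_j}$. Monotonicity of $S$ interpolates between $k_j$ and $k_{j+1}$ at the cost of a factor $\rho$, which gives $\limsup cR_n/\sqrt{2n\log\log n}\le1$. For the lower bound, take a sparse sequence $k_j=\lfloor e^{j^{1+\eta}}\rfloor$ and consider the independent blocks $S_{k_j}-S_{k_{j-1}}=\sum_{k_{j-1}<i\le k_j}\tau_i$. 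The same tilting argument applies to such a partial sum, since the indices $i\le k_{j-1}$ contribute only $o(k_j\sqrt\theta)$ to the log-Laplace transform. It gives $\mathbb P\bigl(S_{k_j}-S_{k_{j-1}}\le x_{k_j}\bigr)\ge j^{-(1+\eta)(1-\delta')}$, which is not summable once $\eta$ is small relative to $\delta'$. The second Borel--Cantelli lemma therefore yields infinitely many such $j$. Finally, the already proved upper-bound argument, or simply $S_{k_{j-1}}=O(k_{j-1}^2\log\log k_{j-1})$ almost surely, shows $S_{k_{j-1}}=o(x_{k_j})$. Hence $S_{k_j}\le(1+o(1))x_{k_j}$ infinitely often. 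Setting $n=S_{k_j}$ gives $R_n= k_j$ with $cR_n/\sqrt{2n\log\log n}\ge1-\delta''$, and letting $\delta\downarrow0$ completes the proof.
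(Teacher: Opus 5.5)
Your proposal is correct and is essentially the paper's proof. It uses the same ingredients: the independent increments of $S_k$; the asymptotic $\log\mathbb E e^{-\theta S_k}\sim -ck\sqrt{2\theta}$, which is the paper's $-c\mu$ after the substitution $e^{\theta}=\cosh(\mu/k)$ and which makes $1-\varphi$ on an interval of length $j$ exactly $\tanh(\mu/k)\tanh(j\mu/2k)$, so the crossover you worried about reduces to a Riemann sum of $\tanh$; exponential Chebyshev along a geometric subsequence for $\limsup\le 1$; and tilting on sparse independent blocks with the second Borel--Cantelli lemma for $\limsup\ge 1$, with $S_{k_{j-1}}$ controlled by the Chung-type bound of Proposition~\ref{pro-3.1}. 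The only real variation is your concentration step under the tilted law: a variance computation needs second-derivative asymptotics of the product formula, whereas the paper (Lemma~\ref{lem-4.5}) compares the tilts at $(1\pm\varepsilon)\mu_n$ and so gets exponential concentration from the first-order asymptotic alone.
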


When $c=1$, the ORRW is the simple symmetric random walk. Hence, by Theorems \ref{thm-1.1} and
\ref{thm-1.2}, almost surely,
\begin{equation*}
\liminf_{n\to\infty}\frac{R_n}{\sqrt{n/\log\log n}}=\frac{\pi}{\sqrt{2}},\qquad
\limsup_{n\to\infty}\frac{R_n}{\sqrt{2n\log\log n}}=1,
\end{equation*}
 which
recover the corresponding laws for the range of the simple symmetric random walk on $\mathbb Z$.

The principal eigenvalue of the interior simple random walk determines the Chung logarithmic
exponent; the reinforcement contributes a positive prefactor to the single-long-increment lower
bound. In contrast, the leading small-deviation transform is $-c\mu$, and its optimization produces
$c^2/2$ for the inverse lower envelope. 
Thus the Chung constant is independent of $c$, whereas
\[
\limsup_{n\to\infty}\frac{R_n}{\sqrt{2n\log\log n}}=\frac1{c}.
\]

\begin{rem}\label{rem-1.2}
Collevecchio et al.
\cite[Theorem 1.1]{CIT2025} proved a Hartman-Wintner-type LIL for the position process on
$\mathbb Z_+$. For the walk on $\mathbb Z_+$, the range length is $\overline X_n:=\max_{0\le j\le n}X_j$. Since $X_n\le
\overline X_n$, the lower bound for $X_n$ in \cite[Theorem 1.1]{CIT2025} immediately yields the corresponding lower
bound for $\overline X_n$; the matching upper bound follows from \cite[Proposition 3.1]{CIT2025}. Thus, in the notation
of the present paper, almost surely,
\begin{equation*}
\limsup_{n\to\infty}\frac{c\overline X_n}{\sqrt{2n\log\log n}}=1.
\end{equation*}
Theorem \ref{thm-1.2} shows that the same normalization and sharp constant hold for the range on $\mathbb Z$.
\end{rem}

Following the strategy used for the half-line model in \cite[Section~6]{Serlet2013}, we work with the inverse range
process $(S_k)_{k\ge0}$ rather than directly with the range process $(R_n)_{n\ge0}$. In both settings, the inverse process has independent but non-identically distributed increments, and estimates for
this process are transferred to the range through the same monotone inverse relation. The essential difference lies in how the range grows. On $\mathbb Z_+$, the range is the maximum and is governed by a single moving boundary,
whereas on $\mathbb Z$, $R_n=M_n-m_n$ can grow at either endpoint. Our Chung-type LIL holds for every $c>0$, while
Serlet's half-line result was restricted to $c\ge 1$.
The genuinely harder part is to establish the matching inverse range bounds. For the half-line Chung-type LIL, 
Serlet obtained the matching upper bound for the range by comparison with a reflected
simple random walk (cf. \cite[Proposition~18]{Serlet2013}). For the half-line Hartman-Wintner LIL, 
the lower and upper bounds for the range follow from results for the position and
its maximum (cf. \cite[Theorem~1.1 and Proposition~3.1]{CIT2025} respectively).
Neither argument directly yields the sharp bounds required for the range $R_n=M_n-m_n$ on $\mathbb Z$.
Starting from the independent increment decomposition of Pfaffelhuber and Stiefel \cite{PS2021}, we therefore prove the matching
Chung lower bound for the inverse range by a sharp spectral estimate and the occurrence of a single exceptionally long
crossing. The matching Hartman-Wintner upper bound is more delicate: it is a collective small-deviation problem for many
independent but non-identically distributed waiting times, and we solve it by exponential tilting on sparse independent
blocks. Together with the adapted generating function estimates and monotone inversion, these arguments yield the exact
laws for the range on $\mathbb Z$.

The rest of the paper is organized as follows. In Section \ref{sec-2}, we present the inverse range decomposition, the
generating function estimates, and the monotone inversion lemma. In Sections \ref{sec-3} and \ref{sec-4}, we  prove
 the Chung-type LIL and Hartman-Wintner LIL respectively.

\section{Preliminaries}\label{sec-2}

This section establishes the preliminary lemmas needed for the proofs of Theorems \ref{thm-1.1} and \ref{thm-1.2}. We keep $c>0$ fixed and recall the range $R=(R_n)_{n\ge 0}$ and its inverse $S=(S_k)_{k\ge 0}$ from \eqref{1.1} and
\eqref{1.2}. We shall use the following representation by independent increments.

\begin{lem}\label{lem-2.1} (Pfaffelhuber and Stiefel
\cite{PS2021})
For each $i\ge 1$, set
\begin{equation*}
T_i:=S_{i+1}-S_i.
\end{equation*}
Then the random variables $(T_i)_{i\ge 1}$ are independent and may be represented as
\begin{equation}\label{2.1}
T_i=1+\sum_{j=1}^{Y_i}\bigl(1+\tau_i^{(j)}\bigr),\quad i\ge 1,
\end{equation}
where the random variables $(Y_i,\tau_i^{(j)}:i\ge 1,\ j\ge 1)$ are mutually independent. 
Here, $Y_i$ has the shifted
geometric distribution with parameter $1/(1+c)$, i.e., 
\begin{equation*}
\mathbb{P}(Y_i=m)=\frac{1}{1+c}\Bigl(\frac{c}{1+c}\Bigr)^m,\quad m=0,1,2,\dots.
\end{equation*}
For each $i\ge 1$, let $\tau_i$ be the hitting time of $\{-1,i-1\}$ of a simple symmetric random walk started from $0$.
The variables $(\tau_i^{(j)})_{j\ge 1}$ are independent copies of $\tau_i$. Consequently,
\begin{equation}\label{2.2}
S_k=1+\sum_{i=1}^{k-1}T_i,\quad k\ge 1.
\end{equation}
\end{lem}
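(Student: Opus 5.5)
The plan is to decompose the path at the stopping times $S_i$ and use the strong Markov property together with the reflection symmetry $x\mapsto -x$. Fix $i\ge1$. At time $S_i$ the visited interval is $[m,M]$ with $M-m=i$, and the walk sits at one of the endpoints, say $X_{S_i}=M$; the case $X_{S_i}=m$ is the mirror image. From time $S_i$ on, the dynamics depend on the past only through the current interval and the position relative to it. By the strong Markov property and symmetry, the law of $T_i=S_{i+1}-S_i$ given $\mathcal F_{S_i}$ is therefore a fixed law depending only on $i$. This holds because the transition rule depends only on whether $X_n$ equals $M_n$, equals $m_n$, or lies strictly inside, and all of these are translation and reflection invariant. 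Consequently $T_i$ is independent of $\mathcal F_{S_i}$. Since $T_1,\dots,T_{i-1}$ are $\mathcal F_{S_i}$-measurable, induction gives independence of the whole family $(T_i)_{i\ge1}$. The case $i\ge1$ is covered because $S_1=1$ and the first-step symmetry only enters before $S_1$. For $n\ge1$ the boundary rules are exactly as in the paper, and the degenerate case $i=1$, where the interval is $\{0,1\}$ or $\{-1,0\}$, is handled by the same rule.

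Next I will identify the law of $T_i$ by an excursion decomposition. Standing at $M$ with interval $[M-i,M]$, the walk steps outward to $M+1$ with probability $1/(1+c)$, which ends the period after one step. Otherwise, with probability $c/(1+c)$, it steps inward to $M-1$. From $M-1$ the walk moves as a simple symmetric random walk until it reaches either $M$ or $M-i-1$, because every site strictly between $M-i$ and $M$ is interior. One must check the case where the walk hits $m=M-i$ itself. There the rule is the boundary rule at the minimum, so it steps outward to $M-i-1$ with probability $1/(1+c)$, not $1/2$. So the inward excursion should be modelled as follows. Starting from $M-1$, run the symmetric walk until it hits $\{M-i,M\}$. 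Shifting coordinates so that $M-1\mapsto 0$ turns these into $\{-i+1,\,1\}$, which after reflection is $\{-1,\,i-1\}$. This is exactly $\tau_i$. At the end of this excursion the walk is at one of the two endpoints, where the same boundary rule applies: with probability $1/(1+c)$ it exits outward, otherwise it steps inward. By the reflection symmetry, the future from either endpoint has the same law. So each visit to an endpoint is an independent Bernoulli trial: exit with probability $1/(1+c)$, or else spend $1+\tau_i^{(j)}$ steps (one inward step plus the crossing time) before returning to an endpoint.

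The number $Y_i$ of failed trials before the first exit is then geometric with $\mathbb P(Y_i=m)=\frac1{1+c}\bigl(\frac c{1+c}\bigr)^m$. The excursion lengths are i.i.d. copies of $\tau_i$ and are independent of $Y_i$, by repeated use of the strong Markov property at the successive endpoint-visit times. Adding the final exit step gives \eqref{2.1}. Finally $R_1=1$ gives $S_1=1$, and telescoping gives \eqref{2.2}. I expect the main obstacle to be the careful bookkeeping at the opposite endpoint. The excursion must stop at $\{-1,i-1\}$ (relative coordinates) rather than at the new-site boundary, so that the boundary rule at $m_n$ is applied as a fresh trial; this is precisely what produces the identical-trial structure. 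Constructing the independent family $(Y_i,\tau_i^{(j)})$ on a common space, for example by coupling, is routine.
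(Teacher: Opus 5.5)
Your proof is correct. The paper does not prove this lemma at all; it quotes it from Pfaffelhuber and Stiefel, so there is no in-paper argument to compare with. Your route is the standard derivation of this representation:
\begin{itemize}
\item apply the strong Markov property to the chain $(X_n,M_n,m_n)$ at the stopping times $S_i$ and at the successive endpoint-visit times;
\item use translation and reflection invariance of the boundary rules, so that the conditional law of $T_i$ given $\mathcal F_{S_i}$ is a fixed law depending only on $i$.
\end{itemize}
Your identification of the crossing time agrees with the one the paper uses in the proof of Lemma~\ref{lem-2.2}, where $\tau_i$ is the hitting time of $\{0,i\}$ from $1$.

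Two small tidy-ups. First, delete your initial claim that the walk runs until it reaches $M$ or $M-i-1$; the correct stopping set is $\{M-i,M\}$, as you say in the next sentences. Second, carry $S_i<\infty$ a.s.\ along in the induction. It follows because the conditional law of $T_i$ that you compute is a.s.\ finite, and it ensures the strong Markov property is always applied at an a.s.\ finite stopping time.
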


Next, we derive hyperbolic and trigonometric forms of the generating function of $S_k$.

\begin{lem}\label{lem-2.2}
For any $k\ge 1$ and $\theta>0$, define
\begin{equation*}
\Phi_k(\theta):=\mathbb{E}\bigl[(\operatorname{sech}\theta)^{S_k}\bigr].
\end{equation*}
Then
\begin{equation}\label{2.3}
\Phi_k(\theta)
=(\operatorname{sech}\theta)^k\prod_{i=1}^{k-1}
\frac{1}{1+c\,\tanh\theta\,\tanh(i\theta/2)}.
\end{equation}

Furthermore, if $0<\theta<\min\{\pi/k,\pi/2\}$ and $c\tan\theta\,\tan((k-1)\theta/2)<1$, then 
\begin{equation*}
\Psi_k(\theta):=\mathbb{E}\bigl[(\sec\theta)^{S_k}\bigr]
\end{equation*}
is finite and satisfies
\begin{equation}\label{2.4}
\Psi_k(\theta)
=(\sec\theta)^k\prod_{i=1}^{k-1}
\frac{1}{1-c\,\tan\theta\,\tan(i\theta/2)}.
\end{equation}
\end{lem}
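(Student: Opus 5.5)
Since $S_1=1$ and, by \eqref{2.2} of Lemma \ref{lem-2.1}, $S_k=1+\sum_{i=1}^{k-1}T_i$ with independent $T_i$, independence factorizes the generating function:
\[
\mathbb E[s^{S_k}]=s\prod_{i=1}^{k-1}\mathbb E[s^{T_i}] .
\]
This holds for $s=\operatorname{sech}\theta\in(0,1)$ always. It also holds for $s=\sec\theta>1$ as an identity in $[0,\infty]$, by Tonelli for nonnegative variables. So the task reduces to computing $\mathbb E[s^{T_i}]$ from \eqref{2.1}.

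Conditioning on $Y_i$ and using the independence of the $\tau_i^{(j)}$,
\[
\mathbb E[s^{T_i}]=s\,\mathbb E\bigl[(s\,g_i(s))^{Y_i}\bigr],\qquad g_i(s):=\mathbb E[s^{\tau_i}].
\]
The shifted geometric law then gives
\[
\mathbb E[s^{T_i}]=\frac{s}{1+c-c\,s\,g_i(s)},
\]
provided $\frac{c}{1+c}\,s\,g_i(s)<1$; otherwise the expectation is infinite. Together with the factor $s$ from $S_1$, the product over $i=1,\dots,k-1$ gives $s^k\prod_{i=1}^{k-1}(1+c-c\,s\,g_i(s))^{-1}$. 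It remains to show
\[
1+c-c\,s\,g_i(s)=1+c\tanh\theta\tanh(i\theta/2)\quad\text{for } s=\operatorname{sech}\theta,
\]
and the trigonometric analogue for $s=\sec\theta$.

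For the exit-time generating function: let $u(x)=\mathbb E_x[s^{\tau}]$, where $\tau$ is the hitting time of $\{-1,i-1\}$ for simple random walk. It solves $u(x)=\tfrac s2(u(x+1)+u(x-1))$ on $0\le x\le i-2$ with $u(-1)=u(i-1)=1$. With $s=\operatorname{sech}\theta$ the solutions are combinations of $\cosh(x\theta)$ and $\sinh(x\theta)$, and the symmetric one about the midpoint $(i-2)/2$ is
\[
u(x)=\frac{\cosh((x-\tfrac{i-2}{2})\theta)}{\cosh(i\theta/2)} .
\]
Hence $g_i=u(0)=\cosh((i-2)\theta/2)/\cosh(i\theta/2)$; the case $i=1$ gives $\tau=0$ and $g_1=1$, consistent with the formula. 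Then
\[
1-s\,g_i=\frac{\cosh\theta\cosh(i\theta/2)-\cosh(i\theta/2-\theta)}{\cosh\theta\cosh(i\theta/2)}=\frac{\sinh\theta\sinh(i\theta/2)}{\cosh\theta\cosh(i\theta/2)}=\tanh\theta\tanh(i\theta/2),
\]
by the addition formula. This yields \eqref{2.3}, and the convergence condition is automatic since $s\,g_i<1$.

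For \eqref{2.4} I would replay the computation with $\cosh\to\cos$ and $\sinh\to\sin$, using $u(x)=\cos((x-\tfrac{i-2}{2})\theta)/\cos(i\theta/2)$. This yields $1-s\,g_i=-\tan\theta\tan(i\theta/2)$, hence the factor $1-c\tan\theta\tan(i\theta/2)$.

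The main obstacle is justification, not algebra: the solution $u$ of the difference equation must equal $\mathbb E[(\sec\theta)^{\tau_i}]$, which is finite. My plan is to use the hypothesis $\theta<\pi/k\le\pi/(i+1)$, which gives $i\theta/2<\pi/2$, so $u>0$ on the closed interval $\{-1,\dots,i-1\}$. Then $M_n=(\sec\theta)^{n\wedge\tau}u(X_{n\wedge\tau})$ is a positive martingale. Fatou's lemma gives $\mathbb E[(\sec\theta)^\tau]\le u(0)<\infty$. For the reverse inequality, the bounded $u$ and the tail bound $\mathbb P(\tau>n)\le C\rho^n$ with $\rho<\cos\theta$ (the principal Dirichlet eigenvalue of the interval is $\cos(\pi/(i+1))<\cos\theta$) allow passage to the limit. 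This is the step I expect to require the most care.

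Finally, the geometric series requires $\frac{c}{1+c}s\,g_i<1$, i.e.
\[
1+c\bigl(1-s\,g_i\bigr)=1-c\tan\theta\tan(i\theta/2)>0 .
\]
This holds for all $i\le k-1$ because $\tan(i\theta/2)$ is increasing in $i$ and, by hypothesis, $c\tan\theta\tan((k-1)\theta/2)<1$. Therefore $\Psi_k(\theta)$ is finite and equals \eqref{2.4}.
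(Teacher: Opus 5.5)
Your proposal is correct and follows essentially the paper's route: factorization over the independent $T_i$, conditioning on the geometric $Y_i$, and optional stopping for the martingale $(\sec\theta)^n\cos\bigl(\theta(Z_n-\tfrac{i-2}{2})\bigr)$. The one real difference is that the paper cites Serlet for the $\operatorname{sech}$ exit-time transform, whereas you derive it from the difference equation.

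For the final limit passage in the trigonometric case, the spectral tail bound is unnecessary. Once Fatou gives $\mathbb{E}[(\sec\theta)^{\tau_i}]<\infty$, the bound $|M_n|\le(\sec\theta)^{\tau_i}/\cos(i\theta/2)$ lets dominated convergence finish directly, as the paper does. Also, the principal Dirichlet eigenvalue there is $\cos(\pi/i)$, not $\cos(\pi/(i+1))$; this slip is harmless because $\theta<\pi/k\le\pi/(i+1)<\pi/i$.
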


\begin{rem}\label{rem-2.3}
For comparison, let
\begin{equation*}
T_n^+:=\inf\{k\ge0:X_k=n\}
\end{equation*}
be the hitting time of level $n$ for the ORRW $(X_k)_{k\ge0}$ on $\mathbb{Z}_+$ reflected at $0$, with reinforcement
parameter $c$. By \cite[Proposition~16, equations~(45)--(46)]{Serlet2013}, for every $\lambda>0$,
\begin{equation}\label{2.5}
\mathbb{E}\left[\left(\frac{1}{\cosh\lambda}\right)^{T_n^+}\right]
=\frac{1}{\cosh^n\lambda}\prod_{j=1}^{n-1}
\frac{1}{1+c\,\tanh\lambda\,\tanh(j\lambda)},
\end{equation}
and, for $\lambda\in(-\pi/(2n),\pi/(2n))$ such that $c\tan\lambda\tan(j\lambda)<1$ for all $1\le j\le n-1$,
\begin{equation}\label{2.6}
\mathbb{E}\left[\left(\frac{1}{\cos\lambda}\right)^{T_n^+}\right]
=\frac{1}{\cos^n\lambda}\prod_{j=1}^{n-1}
\frac{1}{1-c\,\tan\lambda\,\tan(j\lambda)}.
\end{equation}
Comparison of \eqref{2.5} with \eqref{2.3} and of \eqref{2.6} with \eqref{2.4} shows that
the terms $\tanh(j\lambda)$ and $\tan(j\lambda)$ for the half-line model are replaced   by
$\tanh(j\theta/2)$ and $\tan(j\theta/2)$ respectively, in the present setting.
\end{rem}

\begin{proof}[ Proof of Lemma \ref{lem-2.2}]
The assertion is immediate for $k=1$, since $S_1=1$. We henceforth assume that $k\ge 2$. After translating the simple
symmetric random walk in the definition of $\tau_i$ by one unit, $\tau_i$ becomes the hitting time of $\{0,i\}$
of
a simple symmetric random walk started from $1$. Hence, by \cite[Lemma~12, equation~(28)]{Serlet2013},
\begin{equation}\label{2.7}
\mathbb{E}[(\operatorname{sech}\theta)^{1+\tau_i}]=1-\tanh\theta\,\tanh(i\theta/2),\quad 1\le i\le k-1.
\end{equation}
The generating function of $Y_i$ is
\begin{equation}\label{2.8}
\mathbb{E}[z^{Y_i}]=\frac{1}{1+c}\sum_{m=0}^{\infty}\left(\frac{c}{1+c}z\right)^m=\frac{1}{1+c-cz},
\quad 0\le z<\frac{1+c}{c}.
\end{equation}
Since $\mathbb{E}[(\operatorname{sech}\theta)^{1+\tau_i}]<1$, we may apply \eqref{2.8} with
$z=\mathbb{E}[(\operatorname{sech}\theta)^{1+\tau_i}]$. Thus, by Lemma \ref{lem-2.1}, conditioning on $Y_i$, and \eqref{2.7}, we have
\begin{align}
&\mathbb{E}[(\operatorname{sech}\theta)^{T_i}]
=\operatorname{sech}\theta\cdot \mathbb{E}\left[\mathbb{E}\left[\prod_{j=1}^{Y_i}(\operatorname{sech}\theta)^{1+\tau_i^{(j)}}
\,\middle|\,Y_i\right]\right] \nonumber\\
&=\operatorname{sech}\theta\cdot\mathbb{E}\left[\left(\mathbb{E}[(\operatorname{sech}\theta)^{1+\tau_i}]\right)^{Y_i}\right] 
=\frac{\operatorname{sech}\theta}{1+c-c\,\mathbb{E}[(\operatorname{sech}\theta)^{1+\tau_i}]} \nonumber\\
&=\frac{\operatorname{sech}\theta}{1+c\,\tanh\theta\,\tanh(i\theta/2)}\label{2.8-a}.
\end{align}
Using \eqref{2.2} and the independence of the $T_i$'s, we obtain
\begin{align*}
\Phi_k(\theta)
&=\operatorname{sech}\theta\cdot \prod_{i=1}^{k-1}\mathbb{E}[(\operatorname{sech}\theta)^{T_i}]
=(\operatorname{sech}\theta)^k\prod_{i=1}^{k-1}\frac{1}{1+c\,\tanh\theta\,\tanh(i\theta/2)},
\end{align*}
which is \eqref{2.3}.

We now prove the trigonometric form. 
Fix $1\le i\le k-1$, and let $(Z_n)_{n\ge0}$ be the simple symmetric
random walk in the definition of $\tau_i$, with natural filtration $(\mathcal{F}_n)_{n\ge0}$. Define
\begin{equation*}
M_n:=(\sec\theta)^n\cos\left(\theta\left(Z_n-\frac{i-2}{2}\right)\right),
\quad n\ge0.
\end{equation*}
Then $(M_n)_{n\ge0}$ is a martingale, since
\begin{align*}
&\mathbb{E}[M_{n+1}\mid\mathcal{F}_n]
=\frac{(\sec\theta)^{n+1}}{2}\left[
\cos\left(\theta\left(Z_n+1-\frac{i-2}{2}\right)\right)
+\cos\left(\theta\left(Z_n-1-\frac{i-2}{2}\right)\right)\right]\\
&=(\sec\theta)^{n+1}\cos\theta\,
\cos\left(\theta\left(Z_n-\frac{i-2}{2}\right)\right)
=M_n.
\end{align*}
Applying the optional stopping theorem to $(M_n)_{n\ge0}$ at $\tau_i\wedge n$ and using $Z_0=0$, we get
\begin{equation}\label{2.9}
\mathbb{E}[M_{\tau_i\wedge n}]=M_0=\cos(i\theta/2-\theta).
\end{equation}
Because
 $1\leq i\le k-1$ and $0<\theta<\pi/k$,
we have $0<i\theta/2<\pi/2$. Since $\tau_i$ is the first hitting time of $\{-1,i-1\}$, we have
$Z_{\tau_i\wedge n}\in\{-1,0,\ldots,i-1\}$. Hence
\begin{equation*}
\left|Z_{\tau_i\wedge n}-\frac{i-2}{2}\right|\le\frac{i}{2}.
\end{equation*}
Since the cosine function is even and decreasing on $[0,\pi/2]$, we have
\begin{equation*}
\cos\left(\theta\left(Z_{\tau_i\wedge n}-\frac{i-2}{2}\right)\right)
\ge\cos(i\theta/2)>0.
\end{equation*}
It follows that
\begin{equation}\label{2.10}
M_{\tau_i\wedge n}=(\sec\theta)^{\tau_i\wedge n}
\cos\left(\theta\left(Z_{\tau_i\wedge n}-\frac{i-2}{2}\right)\right)
\ge \cos(i\theta/2)(\sec\theta)^{\tau_i\wedge n}.
\end{equation}
Taking expectations in \eqref{2.10} and using \eqref{2.9}, we obtain
\begin{equation}\label{2.11}
\cos(i\theta/2-\theta)\ge \cos(i\theta/2)\,\mathbb{E}[(\sec\theta)^{\tau_i\wedge n}].
\end{equation}
Since $\tau_i<\infty$ almost surely and $\sec\theta>1$, we have
$(\sec\theta)^{\tau_i\wedge n}\uparrow(\sec\theta)^{\tau_i}$. Hence, by the monotone convergence theorem and \eqref{2.11}, we get
\begin{equation*}
\mathbb{E}[(\sec\theta)^{\tau_i}]
=\lim_{n\to\infty}\mathbb{E}[(\sec\theta)^{\tau_i\wedge n}]
\le\frac{\cos(i\theta/2-\theta)}{\cos(i\theta/2)}<\infty.
\end{equation*}

The preceding estimate shows that $(\sec\theta)^{\tau_i}$ is integrable. Moreover,
$M_{\tau_i\wedge n}\to M_{\tau_i}$ almost surely, while
\begin{equation*}
|M_{\tau_i\wedge n}|\le (\sec\theta)^{\tau_i\wedge n}\le (\sec\theta)^{\tau_i}.
\end{equation*}
We may therefore let $n\to\infty$ in \eqref{2.9} by the dominated convergence theorem. Since $Z_{\tau_i}\in\{-1,i-1\}$, the argument of the cosine in $M_{\tau_i}$ is either 
 $-i\theta/2$ or $i\theta/2$, and so  the cosine factor equals $\cos(i\theta/2)$. 
Consequently,
\begin{equation*}
\cos(i\theta/2-\theta)=\lim_{n\to\infty}\mathbb{E}[M_{\tau_i\wedge n}]
=\mathbb{E}[M_{\tau_i}]=\cos(i\theta/2)\,\mathbb{E}[(\sec\theta)^{\tau_i}].
\end{equation*}
 It follows that
\begin{equation}\label{2.12}
\mathbb{E}[(\sec\theta)^{1+\tau_i}]=\sec\theta\cdot\mathbb{E}[(\sec\theta)^{\tau_i}]
=\sec\theta\,\frac{\cos(i\theta/2-\theta)}{\cos(i\theta/2)}
=1+\tan\theta\,\tan(i\theta/2).
\end{equation}
Since $\tan$ is increasing on $(0,\pi/2)$, the assumption
$c\tan\theta\,\tan((k-1)\theta/2)<1$ implies
\begin{equation*}
c\tan\theta\,\tan(i\theta/2)<1,\quad \forall 1\le i\le k-1.
\end{equation*}
Consequently,
\begin{equation*}
\frac{c}{1+c}\,\mathbb{E}[(\sec\theta)^{1+\tau_i}]
=\frac{c}{1+c}\bigl(1+\tan\theta\,\tan(i\theta/2)\bigr)<1.
\end{equation*}
Hence, we may apply \eqref{2.8} with $z=\mathbb{E}[(\sec\theta)^{1+\tau_i}]$. By Lemma
\ref{lem-2.1}, conditioning on $Y_i$, and \eqref{2.12}, we get
\begin{align*}
&\mathbb{E}[(\sec\theta)^{T_i}]
=\sec\theta\cdot\mathbb{E}\left[\mathbb{E}\left[\prod_{j=1}^{Y_i}(\sec\theta)^{1+\tau_i^{(j)}}\,\middle|\,Y_i\right]\right]\\
&=\sec\theta\cdot\mathbb{E}\left[\left(\mathbb{E}[(\sec\theta)^{1+\tau_i}]\right)^{Y_i}\right]
=\frac{\sec\theta}{1+c-c\,\mathbb{E}[(\sec\theta)^{1+\tau_i}]}\\
&=\frac{\sec\theta}{1-c\,\tan\theta\,\tan(i\theta/2)}.
\end{align*}
Using \eqref{2.2} and the independence of the $T_i$'s, we have
\begin{equation*}
\Psi_k(\theta)=\sec\theta\cdot\prod_{i=1}^{k-1}\mathbb{E}[(\sec\theta)^{T_i}]
=(\sec\theta)^k\prod_{i=1}^{k-1}\frac{1}{1-c\,\tan\theta\,\tan(i\theta/2)},
\end{equation*}
which is \eqref{2.4}. The proof is complete.
\end{proof}

The following elementary inversion converts estimates for the inverse range at level $k$ into estimates for the range at
time $n$.

\begin{lem}\label{lem-2.4}
For the range $R=(R_n)_{n\ge 0}$ and its inverse $S=(S_k)_{k\ge 0}$ defined in \eqref{1.1} and \eqref{1.2}, the
following inversion relations hold.

(i) If there exists a constant $a\in(0,\infty)$ such that
\begin{equation}\label{i.1}
\limsup_{k\to\infty}\frac{S_k}{k^2\log\log k}=a\quad{\rm a.s.}, 
\end{equation}
then 
\begin{equation}\label{i.2}
\liminf_{n\to\infty}\sqrt{\frac{\log\log n}{n}}\,R_n=a^{-1/2}\quad{\rm a.s.}
\end{equation}

(ii) If there exists a constant $b\in(0,\infty)$ such that
\begin{equation}\label{ii.1}
\liminf_{k\to\infty}\frac{S_k\log\log k}{k^2}=b\quad{\rm a.s.}, 
\end{equation}
then 
\begin{equation}\label{ii.2}
\limsup_{n\to\infty}\frac{R_n}{\sqrt{n\log\log n}}=b^{-1/2}\quad{\rm a.s.}
\end{equation}
\end{lem}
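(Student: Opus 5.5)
The plan is to exploit the monotone inverse relation between $R$ and $S$. Since $R$ is nondecreasing, starts at $0$, and increases by at most one per step, we have for every $n\ge 1$ and $k\ge 1$
\begin{equation*}
R_n\ge k \iff S_k\le n,\qquad\text{and}\qquad S_{R_n}\le n<S_{R_n+1}.
\end{equation*}
Moreover $R_n\to\infty$ a.s., because each $T_i<\infty$ a.s. by Lemma \ref{lem-2.1}, and $R_n\le n$. We will transfer the statements about $S$ along the random subsequence $k=R_n$, and conversely along $n=S_k$ or $n=S_{k+1}-1$.

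For (i), first the lower bound on $\liminf$. Fix $\varepsilon>0$. By \eqref{i.1}, a.s. for all large $k$ we have $S_{k+1}\le (a+\varepsilon)(k+1)^2\log\log(k+1)$. Apply this with $k=R_n$ and use $n<S_{R_n+1}$:
\begin{equation*}
n<(a+\varepsilon)(R_n+1)^2\log\log(R_n+1).
\end{equation*}
Since $R_n\le n$, we have $\log\log(R_n+1)\le\log\log(n+1)$. Hence $(R_n+1)^2\ge n/((a+\varepsilon)\log\log(n+1))$, and therefore $\liminf R_n\sqrt{\log\log n/n}\ge (a+\varepsilon)^{-1/2}$.

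For the upper bound on $\liminf$, take the infinitely many $k$ with $S_{k}\ge (a-\varepsilon)k^2\log\log k$ and set $n=S_k-1$. Then $R_n=k-1$, and $n\sim S_k\ge (a-\varepsilon)k^2\log\log k$. Since $k\le n$ and $n$ is at most polynomial in $k$ on this event (for example $n\le k^3$ eventually, again by \eqref{i.1} with $a+\varepsilon$), we get $\log\log n\sim\log\log k$. Because $x\mapsto x/\log\log x$ is eventually increasing,
\begin{equation*}
(k-1)\sqrt{\log\log n/n}\le (1+o(1))\,(a-\varepsilon)^{-1/2}.
\end{equation*}

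Part (ii) is symmetric with the roles swapped. For the upper bound on $\limsup$, \eqref{ii.1} gives $S_k\ge (b-\varepsilon)k^2/\log\log k$ eventually. With $k=R_n$ and $S_{R_n}\le n$ we get $R_n^2\le n\log\log R_n/(b-\varepsilon)\le n\log\log n/(b-\varepsilon)$. For the lower bound on $\limsup$, choose $k$ along which $S_k\le (b+\varepsilon)k^2/\log\log k$ and take $n=S_k$, so that $R_n=k$. We need $\log\log n\sim\log\log k$. This follows because $n=S_k\ge k$, and along these $k$ also $n\le k^2$.

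The only mildly delicate step is this comparison of $\log\log n$ with $\log\log k$ under the random change of index, together with the $\pm1$ shifts between $k$, $R_n$ and $R_n+1$. Both are harmless because $\log\log$ is slowly varying and $k\le n\le k^3$ holds in each case. Letting $\varepsilon\downarrow0$ completes both parts.
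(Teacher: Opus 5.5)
Your proposal is correct and follows essentially the same monotone-inversion route as the paper. It uses $S_{R_n}\le n<S_{R_n+1}$, tests the reverse inequalities along the same subsequences $n=S_k-1$ in (i) and $n=S_k$ in (ii), and justifies $\log\log n\sim\log\log k$ by polynomial bounds between $n$ and $k$. The only cosmetic difference is that the paper first reduces each $\liminf$/$\limsup$ to those endpoint subsequences via eventual monotonicity of the normalizer, whereas you obtain the bound for all $n$ directly from the sandwich inequality.
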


\begin{proof}
Let
\begin{equation*}
L(x):=\log\log x
\end{equation*}
for large $x$. By the definition of $S$ and the monotonicity of $R$, for all sufficiently large $n$,
\begin{equation}\label{2.13}
S_{R_n}\le n<S_{R_n+1}, \quad \text{ and } R_n=k \ \text{whenever} \ S_{k}\le n<S_{k+1}.
\end{equation}
Since the range can increase by at most one at each step, $R_n\le n$. By Lemma \ref{lem-2.1}, $S_k<\infty$ for all
$k$ almost surely. Hence
 $R_n\to\infty$ a.s.

$(i)$ Define the subsequence
\begin{equation*}
n_k:=S_k-1.
\end{equation*}
On the interval $S_{k-1}\le n<S_k$, by \eqref{2.13},
we know that the range is $k-1$.
Since $\sqrt{L(n)/n}$ is eventually decreasing as $n\to\infty$, the minimum of $R_n\,\sqrt{L(n)/n}$ on this interval is attained at 
$n_k$. Thus,
\begin{equation}\label{2.15}
\liminf_{n\to\infty}R_n\,\sqrt{\frac{L(n)}{n}}=\liminf_{k\to\infty}(k-1)\sqrt{\frac{L(n_k)}{n_k}}\quad {\rm a.s.}
\end{equation}
Fix $\varepsilon\in(0,a)$. By \eqref{i.1}, almost surely for all sufficiently large $k$,
\begin{equation}\label{2.20}
S_k\le(a+\varepsilon)k^2L(k).
\end{equation}
Since $R_n\le n$, we have $S_k\ge k$. Together with \eqref{2.20}, this implies $L(S_k-1)/L(k)\to1$, $(S_k-1)/S_k\to1$.
Therefore, by \eqref{2.15}, we have
\begin{align*}
 &\liminf_{n\to\infty}R_n\sqrt{\frac{L(n)}{n}}=\liminf_{k\to\infty}(k-1)\sqrt{\frac{L(n_k)}{n_k}}\\
 &=\liminf_{k\to\infty}\frac{k-1}{k}\sqrt{\frac{L(S_k-1)}{L(k)}}\sqrt{\frac{S_k}{S_k-1}}\sqrt{\frac{k^2L(k)}{S_k}}\\
 &=\liminf_{k\to\infty}(1+o(1))\left(\frac{S_k}{k^2L(k)}\right)^{-1/2}
 =\left(\limsup_{k\to\infty}\frac{S_k}{k^2L(k)}\right)^{-1/2}
 =a^{-1/2}\quad {\rm a.s.}
\end{align*}
This proves \eqref{i.2}.

 $(ii)$ Define the subsequence
\begin{equation*}
N_k:=S_k.
\end{equation*}
On the interval $S_{k}\le n<S_{k+1}$, by \eqref{2.13},
we know that the range is $k$.
Since $1/\sqrt{nL(n)}$ is eventually decreasing as $n\to\infty$, the maximum of $R_n/\sqrt{nL(n)}$ on this interval is attained 
at $N_k$. Thus,
\begin{equation}\label{2.21}
\limsup_{n\to\infty}\frac{R_n}{\sqrt{nL(n)}}=\limsup_{k\to\infty}\frac{k}{\sqrt{N_kL(N_k)}}\quad {\rm a.s.}
\end{equation}
Since $R_{S_k}=k$ and $R_n\le n$,
we have $N_k=S_k\ge k$, and thus $L(N_k)\ge L(k)$ for all $k$.
By \eqref{2.21} and \eqref{ii.1}, we obtain
\begin{align}\label{2.22}
&\limsup_{n\to\infty}\frac{R_n}{\sqrt{nL(n)}}=\limsup_{k\to\infty}\frac{k}{\sqrt{N_kL(N_k)}}\nonumber\\
&\le\limsup_{k\to\infty}\left(\frac{S_kL(k)}{k^2}\right)^{-1/2}
=\left(\liminf_{k\to\infty}\frac{S_kL(k)}{k^2}\right)^{-1/2}
=b^{-1/2}\quad {\rm a.s.}
\end{align}
For the reverse inequality, by \eqref{ii.1}, 
we can choose a subsequence $k_j\to\infty$ such that
\begin{equation}\label{2.23}
\frac{S_{k_j}L(k_j)}{k_j^2}\to b \quad {\rm a.s.}
\end{equation}
Along this subsequence, for any $\varepsilon>0$, $k_j\le N_{k_j}=S_{k_j}\le (b+\varepsilon)k_j^2/L(k_j)$ for sufficiently large $j$, and thus $L(N_{k_j})/L(k_j)\to1$.
By \eqref{2.21} and \eqref{2.23}, we have
\begin{align}\label{2.24}
&\limsup_{n\to\infty}\frac{R_n}{\sqrt{nL(n)}}=\limsup_{k\to\infty}\frac{k}{\sqrt{N_kL(N_k)}}\nonumber\\
&\ge\lim_{j\to\infty}\frac{k_j}{\sqrt{N_{k_j}L(N_{k_j})}}
=\lim_{j\to\infty}\left(\frac{S_{k_j}L(k_j)}{k_j^2}\right)^{-1/2}\sqrt{\frac{L(k_j)}{L(N_{k_j})}}
= b^{-1/2}\quad {\rm a.s.}
\end{align}
Combining \eqref{2.22} and \eqref{2.24}, we obtain \eqref{ii.2}.
The proof is complete.
\end{proof}

\section{Proof of Theorem \ref{thm-1.1}}\label{sec-3}

 In this section, we give the proof of Theorem \ref{thm-1.1}. We need the following two propositions, whose proofs are postponed to Sections \ref{sec-3.2} and \ref{sec-3.3} respectively.
Recall that $S_k$ is 
the inverse range of the once-reinforced random walk on $\mathbb Z$ with reinforcement parameter $c>0$.

\begin{pro}\label{pro-3.1}
It holds that
\begin{equation}\label{pro-3.1-a}
\limsup_{k\to\infty}\frac{S_k}{k^2\log\log k}\le \frac{2}{\pi^2}\quad {\rm a.s.}
\end{equation}
\end{pro}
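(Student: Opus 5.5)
We prove the upper bound in Proposition~\ref{pro-3.1} by an exponential Chebyshev bound, using the trigonometric generating function \eqref{2.4}, followed by Borel--Cantelli along a geometric subsequence and monotonicity of $S_k$.

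\emph{Tail bound at level $k$.} Fix $\varepsilon>0$ and a small $\eta>0$, and take $\theta=\theta_k:=(1-\eta)\cdot 2\pi/(2k)\cdot\frac12\cdot 2=(1-\eta)\pi/k$. Then $(k-1)\theta/2<(1-\eta)\pi/2$, so $\tan((k-1)\theta/2)\le C_\eta$, while $\tan\theta\sim(1-\eta)\pi/k$. Hence $c\tan\theta\tan((k-1)\theta/2)=O(1/k)<1$ for large $k$, and \eqref{2.4} applies. To bound the product $\prod_{i=1}^{k-1}(1-c\tan\theta\tan(i\theta/2))^{-1}$, note that each factor satisfies $c\tan\theta\tan(i\theta/2)\le cC_\eta\pi/k$. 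The logarithm of the product is therefore at most
\[
\sum_i 2c\tan\theta\tan(i\theta/2)\lesssim \frac{c}{k}\sum_{i<k}\tan\Bigl(\frac{i(1-\eta)\pi}{2k}\Bigr)\approx c\int_0^{(1-\eta)\pi/2}\tan u\,du\cdot\frac{2}{\pi}=O_\eta(1).
\]
Also $(\sec\theta)^k=\exp(O(1/k))$. So $\Psi_k(\theta_k)\le C_\eta$ uniformly in large $k$. Since $\log\sec\theta\ge\theta^2/2$, Markov's inequality gives
\[
\mathbb P\bigl(S_k>x k^2\log\log k\bigr)\le C_\eta\exp\Bigl(-\tfrac{(1-\eta)^2\pi^2}{2}\,x\log\log k\Bigr)=C_\eta(\log k)^{-(1-\eta)^2\pi^2x/2}.
\]
With $x=(1+\varepsilon)2/\pi^2$ and $\eta$ small, the exponent is $p>1$.

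\emph{Borel--Cantelli and interpolation.} Set $k_j=\lfloor\rho^j\rfloor$ with $\rho>1$. Then $(\log k_j)^{-p}\asymp j^{-p}$ is summable, so a.s. eventually $S_{k_j}\le(1+\varepsilon)\frac{2}{\pi^2}k_j^2\log\log k_j$. For $k_{j-1}\le k\le k_j$, monotonicity of $S$ gives
\[
S_k\le S_{k_j}\le(1+\varepsilon)\frac{2}{\pi^2}\rho^2(1+o(1))\,k^2\log\log k.
\]
Letting $\rho\downarrow1$ and $\varepsilon\downarrow0$ along countable sequences yields \eqref{pro-3.1-a}.

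\emph{Main obstacle.} The delicate step is controlling the product in \eqref{2.4} as $\theta k$ approaches $\pi$, where $\tan(i\theta/2)$ blows up near $i\approx k$. Keeping a fixed margin $\eta$ makes the product bounded by a constant depending only on $\eta$, $c$. That constant does not affect summability, since only the exponent $(1-\eta)^2\pi^2x/2>1$ matters. The plan is to make this Riemann-sum bound rigorous, using $\log(1-y)^{-1}\le 2y$ for $y\le 1/2$, which holds uniformly for large $k$.
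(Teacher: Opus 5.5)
Your proposal is correct and follows essentially the same route as the paper. The paper's Lemma~3.3 shows $\sup_{k\ge k_0}\Psi_k(\mu/k)<\infty$ for fixed $\mu=(1-\eta)\pi<\pi$ via \eqref{2.4}, the bound $-\log(1-y)\le 2y$ and a Riemann-sum estimate; it then applies exponential Markov along $k_n=\lfloor q^n\rfloor$, Borel--Cantelli and monotone interpolation, with your exact inequality $\log\sec\theta\ge\theta^2/2$ being only a cosmetic substitute for its asymptotic expansion of $\log\cos$.
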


\begin{pro}\label{pro-3.2}
It holds that 
\begin{equation}\label{pro-3.2-a}
\limsup_{k\to\infty}\frac{S_k}{k^2\log\log k}\ge \frac{2}{\pi^2}\quad {\rm a.s.}
\end{equation}
\end{pro}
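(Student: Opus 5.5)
The plan is to show that, infinitely often, a \emph{single} increment $T_i$ with $i$ close to a level $k$ is already of size $(1-\varepsilon)\frac{2}{\pi^2}k^2\log\log k$. Since $S_{k+1}\ge T_i$ for every $i\le k$ by \eqref{2.2}, this will give \eqref{pro-3.2-a}. Independence of the $T_i$'s (Lemma~\ref{lem-2.1}) across disjoint blocks of indices lets us use the second Borel--Cantelli lemma. Fix small $\varepsilon,\eta>0$, put $a:=(1-\varepsilon)\frac{2}{\pi^2}$, and take the sparse levels $k_j:=j^j$, so that $\log k_j=j\log j$. The blocks $I_j:=((1-\eta)k_j,k_j]\cap\mathbb Z$ are then pairwise disjoint for large $j$.

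\textbf{Step 1 (spectral lower bound for a single crossing).} For the simple random walk $Z$ started at $0$ and killed on $\{-1,i-1\}$, I use the eigenfunction $\varphi(x):=\sin(\pi(x+1)/i)$. It satisfies $\frac12(\varphi(x+1)+\varphi(x-1))=\cos(\pi/i)\varphi(x)$ and vanishes on $\{-1,i-1\}$. Hence $(\sec(\pi/i))^{n}\varphi(Z_{n\wedge\tau_i})$ is a bounded-time martingale, which gives
\[
\cos(\pi/i)^t\sin(\pi/i)=\mathbb E\bigl[\varphi(Z_t);\tau_i>t\bigr]\le\mathbb P(\tau_i>t).
\]
Since $T_i\ge\tau_i^{(1)}$ on $\{Y_i\ge1\}$, and this event has probability $c/(1+c)$ and is independent of $\tau_i^{(1)}$, we get for large $i$
\[
\mathbb P(T_i>t)\ge \frac{c}{1+c}\cdot\frac{C}{i}\exp\Bigl(-(1+\eta)\frac{\pi^2t}{2i^2}\Bigr).
\]
The prefactor $1/i$ is essential: the crossing starts next to the boundary. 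Without it, a naive union over $\asymp k$ indices would contradict Proposition~\ref{pro-3.1}.

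\textbf{Step 2 (block events).} Let $t_j:=a k_j^2\log\log k_j$ and $E_j:=\{\exists\, i\in I_j:\ T_i>t_j\}$. For $i\in I_j$ we have $t_j/i^2\le a(1-\eta)^{-2}\log\log k_j$. Step~1 and independence then give
\[
\mathbb P(E_j)\ge 1-\prod_{i\in I_j}\bigl(1-\mathbb P(T_i>t_j)\bigr)\ge c'\min\Bigl\{1,\ \eta\,(\log k_j)^{-(1-\varepsilon)(1+\eta)(1-\eta)^{-2}}\Bigr\}.
\]
Choosing $\eta$ small relative to $\varepsilon$ makes the exponent $<1$. Then $\sum_j (j\log j)^{-(1-\varepsilon')}=\infty$, and the $E_j$ are independent because they depend on disjoint families of $T_i$'s. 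By the second Borel--Cantelli lemma, $E_j$ occurs infinitely often a.s.

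\textbf{Step 3 (conclusion).} On $E_j$ we have $S_{k_j+1}\ge t_j$. Since $(k_j+1)^2\log\log(k_j+1)\sim k_j^2\log\log k_j$, this gives $\limsup_k S_k/(k^2\log\log k)\ge a(1-o(1))$. Letting $\varepsilon\downarrow0$ along a countable sequence yields \eqref{pro-3.2-a}.

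The main obstacle is Step~1, i.e.\ getting the sharp exponent $\pi^2/(2i^2)$ while tracking the polynomial prefactor correctly. We need $-\log\cos(\pi/i)\le(1+\eta)\pi^2/(2i^2)$ for large $i$, together with the factor $\sin(\pi/i)\asymp 1/i$, so that the union over the block of $\asymp\eta k_j$ indices produces exactly $(\log k_j)^{-(1-\varepsilon')}$ and not more or less. The eigenfunction martingale above is what I would try first. If it proves delicate, an alternative is the trigonometric generating function \eqref{2.12} combined with a Tauberian or Chernoff argument, but the direct martingale bound seems cleanest.
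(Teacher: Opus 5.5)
Your proof is correct, and its core is the paper's. You use one exceptionally long excursion $\tau_i^{(1)}$ on $\{Y_i\ge1\}$, a tail bound of order $i^{-1}\cos^t(\pi/i)$ whose $1/i$ prefactor still leaves a divergent series, independence of the $T_i$, and the second Borel--Cantelli lemma. Two technical points differ.

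For the single-crossing estimate, the paper (Lemma~\ref{lem-3.4}) expands the killed kernel in its sine eigenbasis. It then checks with a trigonometric sum identity that all modes contribute nonnegatively and keeps the $r=1$ term, which only works at even times $2m$. Your martingale gives $\mathbb P(\tau_i>t)\ge\sin(\pi/i)\cos^t(\pi/i)$ for every $t$ without the spectral theorem. Since $\sin(\pi/i)\ge 2/i$, it actually contains Lemma~\ref{lem-3.4}. In fact your $\sin(\pi(x+1)/i)$ is exactly the function $\cos(\theta(x-(i-2)/2))$ from the proof of \eqref{2.4}, taken at the critical value $\theta=\pi/i$ where it vanishes at the exit points.

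Conversely, your sparse blocks and union events $E_j$ are unnecessary. The events $\{T_i>Ai^2\log\log i\}$ are already independent across $i$, and the paper simply sums their probabilities, which are $\gtrsim 1/(i(\log i)^{\alpha+\varepsilon})$ with $\alpha=A\pi^2/2<1$, over all $i$. Your bound $\mathbb P(E_j)\gtrsim\eta(\log k_j)^{-\beta}$ is that same series coarse-grained over $((1-\eta)k_j,k_j]$. You pay for it with an extra parameter $\eta$, the $(1-\eta)^{-2}$ loss in the exponent, the disjointness check and the $1-\prod(1-p_i)$ estimate. Blocks of this kind are needed when the relevant event is collective, as for $B_n$ in Proposition~\ref{pro-4.2}, but not when a single increment suffices.
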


\smallskip

\begin{proof}[Proof of Theorem \ref{thm-1.1}]
By Propositions \ref{pro-3.1} and \ref{pro-3.2}, we get
\begin{equation*}
\limsup_{k\to\infty}\frac{S_k}{k^2\log\log k}=\frac{2}{\pi^2}\quad {\rm a.s.},
\end{equation*}
 which together with Lemma \ref{lem-2.4}(i) implies that
\begin{equation*}
\liminf_{n\to\infty}\sqrt{\frac{\log\log n}{n}}\,R_n
= \left(\frac{2}{\pi^2}\right)^{-1/2}=\frac{\pi}{\sqrt 2}\quad {\rm a.s.}
\end{equation*}
The proof is complete.
\end{proof}

\subsection{Proof of Proposition \ref{pro-3.1}}\label{sec-3.2}

To prove Proposition \ref{pro-3.1}, we first establish a uniform bound for the trigonometric generating function  defined by \eqref{2.4}.
 The following lemma adapts the corresponding estimate in deriving the bound in \cite[Theorem~17, equation~(54)]{Serlet2013} to our setting.

\begin{lem}\label{lem-3.3}
For $\mu\in(0,\pi)$, 
there exists an integer $k_0=k_0(\mu,c)$ such that $\Psi_k(\mu/k)$ is well-defined for every
$k\ge k_0$ and
\begin{equation*}
\sup_{k\ge k_0}\Psi_k(\mu/k)<\infty.
\end{equation*}
\end{lem}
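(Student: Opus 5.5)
The plan is to bound each factor of the explicit formula \eqref{2.4} from Lemma \ref{lem-2.2} at $\theta=\mu/k$, uniformly in $k$. The point is that $\mu<\pi$ forces every angle $i\theta/2$ with $i\le k-1$ to stay below $\mu/2<\pi/2$. So all tangents $\tan(i\mu/(2k))$ are bounded by the fixed constant $\tan(\mu/2)$, while the prefactor $\tan(\mu/k)$ is of order $1/k$.

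\emph{Step 1 (well-definedness).} Set $\theta=\mu/k$ and check the hypotheses of Lemma \ref{lem-2.2}.
\begin{itemize}
\item Since $\mu<\pi$, we have $0<\theta<\pi/k$.
\item For $k\ge 2$ we have $\theta<\pi/2$.
\item Moreover $\tan((k-1)\theta/2)\le\tan(\mu/2)<\infty$, because $(k-1)\mu/(2k)<\mu/2<\pi/2$.
\item Since $\tan(\mu/k)\to0$, there is $k_0=k_0(\mu,c)\ge 2$ such that for all $k\ge k_0$,
\begin{equation*}
x_{\max}(k):=c\tan(\mu/k)\tan(\mu/2)\le \tfrac12 \quad\text{and}\quad \tan(\mu/k)\le 2\mu/k .
\end{equation*}
\end{itemize}
Then $c\tan\theta\tan((k-1)\theta/2)\le x_{\max}(k)<1$, so \eqref{2.4} applies and $\Psi_k(\mu/k)$ is finite for every $k\ge k_0$.

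\emph{Step 2 (the secant factor).} Use $-\log\cos u\le u^2$ for small $u$, valid since $\log\cos u=-u^2/2+O(u^4)$. This gives
\begin{equation*}
(\sec(\mu/k))^k=\exp\bigl(-k\log\cos(\mu/k)\bigr)\le \exp(\mu^2/k)\le e^{\mu^2}
\end{equation*}
for $k\ge k_0$, after enlarging $k_0$ if necessary.

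\emph{Step 3 (the product).} Write $x_i:=c\tan(\mu/k)\tan(i\mu/(2k))$. For $1\le i\le k-1$ we have $0<x_i\le x_{\max}(k)\le 1/2$. Using $-\log(1-x)\le 2x$ on $[0,1/2]$, we get
\begin{equation*}
\log\prod_{i=1}^{k-1}\frac{1}{1-x_i}\le 2\sum_{i=1}^{k-1}x_i\le \frac{4c\mu}{k}\sum_{i=1}^{k-1}\tan\Bigl(\frac{\mu}{2}\cdot\frac ik\Bigr).
\end{equation*}
The function $f(x)=\tan(\mu x/2)$ is increasing on $[0,1]$, so $\frac1k\sum_{i=1}^{k-1}f(i/k)\le\int_0^1 f(x)\,dx$. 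Computing the integral,
\begin{equation*}
\int_0^1\tan(\mu x/2)\,dx=\frac{2}{\mu}\log\sec(\mu/2)<\infty .
\end{equation*}
Hence the product is at most $\exp\bigl(8c\log\sec(\mu/2)\bigr)=(\sec(\mu/2))^{8c}$, uniformly in $k\ge k_0$.

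Combining Steps 2 and 3 yields
\begin{equation*}
\sup_{k\ge k_0}\Psi_k(\mu/k)\le e^{\mu^2}(\sec(\mu/2))^{8c}<\infty .
\end{equation*}

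The only real obstacle is Step 3. It needs the individual terms $x_i$ to stay uniformly away from $1$, so that the logarithm can be linearized, and it needs the Riemann sum to be dominated by a \emph{finite} integral. Both requirements are exactly where the condition $\mu<\pi$ is used: at $\mu=\pi$ the integral $\int_0^1\tan(\pi x/2)\,dx$ diverges.
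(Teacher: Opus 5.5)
Your proof is correct and follows essentially the same route as the paper. Like the paper, you verify the hypotheses of Lemma~\ref{lem-2.2} at $\theta=\mu/k$ with all product terms at most $1/2$, bound the secant prefactor, linearize with $-\log(1-x)\le 2x$ on $[0,1/2]$, and control the resulting Riemann sum by the finite integral $\int_0^1\tan(\mu x/2)\,dx$. The only difference is cosmetic: you dominate the Riemann sum by the integral via monotonicity of $\tan$ instead of passing to the limit, which gives the explicit uniform bound $e^{\mu^2}(\sec(\mu/2))^{8c}$.
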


\begin{proof}
Since $\mu\in  (0,\pi)$, we have $\mu/k<\pi/k$ for every $k$, and $\mu/k<\pi/2$ for all sufficiently large $k$. Also, since
$\tan(\mu/k)\sim \mu/k$ and $\lim_{k\to\infty}\tan((k-1)\mu/2k)=\tan(\mu/2)<\infty$, there exists $k_0=k_0(\mu,c)$ such that
\begin{equation}\label{3.1}
c\tan(\mu/k)\tan\bigl((k-1)\mu/2k\bigr)\le \frac{1}{2},
\quad  \forall k\ge k_0.
\end{equation}
Thus we may choose $k_0=k_0(\mu,c)$ so that the assumptions of Lemma \ref{lem-2.2} are satisfied with
$\theta=\mu/k$ for all $k\ge k_0$. Then, by \eqref{2.4}, $\Psi_k(\mu/k)$ is well-defined for any $k\ge k_0$ and
\begin{equation*}
\Psi_k(\mu/k)
=\bigl(\sec(\mu/k)\bigr)^k\prod_{i=1}^{k-1}
\frac{1}{1-c\tan(\mu/k)\tan(i\mu/2k)}.
\end{equation*}
Taking logarithms, we obtain
\begin{equation*}
\log\Psi_k(\mu/k)
=k\log\sec(\mu/k)-\sum_{i=1}^{k-1}\log\Bigl(1-c\tan(\mu/k)\tan(i\mu/2k)\Bigr).
\end{equation*}
Since $\log\sec x=x^2/2+O(x^4)$ as $x\to0$, we have $k\log\sec(\mu/k)=O(k^{-1})$. Moreover, since $\tan$ is increasing on
$[0,\mu/2]$, it follows from \eqref{3.1} that
\begin{equation*}
0\le c\tan(\mu/k)\tan(i\mu/2k)\le \frac{1}{2},
\quad  \forall 1\le i\le k-1,
\end{equation*}
and therefore, by $-\log(1-x)\le 2x$ on $[0,1/2]$,
\begin{equation*}
\log\Psi_k(\mu/k)
\le O(1)+2c\tan(\mu/k)\sum_{i=1}^{k-1}\tan(i\mu/2k).
\end{equation*}
Since $\tan(\mu/k)\sim \mu/k$, it remains to control the Riemann sum. We have
\begin{equation*}
\frac{\mu}{k}\sum_{i=1}^{k-1}\tan(i\mu/2k)\longrightarrow
\mu\int_0^1\tan(\mu x/2)\,dx=-2\log\cos(\mu/2)<\infty.
\end{equation*}
It follows that $\log\Psi_k(\mu/k)$ is bounded above uniformly in $k\ge k_0$. This proves
$\sup_{k\ge k_0}\Psi_k(\mu/k)<\infty$. The proof is complete.
\end{proof}

\begin{proof}[Proof of Proposition \ref{pro-3.1}]
Fix $q>1$, let $k_n:=\lfloor q^n\rfloor$, and fix $\mu\in(0,\pi)$. By Lemma \ref{lem-3.3}, there exist $k_0=k_0(\mu,c)$
and $C=C(\mu,c)<\infty$ such that
\begin{equation*}
\Psi_k(\mu/k)\le C, \quad  \forall k\ge k_0.
\end{equation*}
Recall that $\Psi_{k_n}(\mu/k_n)=\mathbb{E}[(\sec(\mu/k_n))^{S_{k_n}}]$. Hence, for any $A>0$ and all large $n$, since
$\sec(\mu/k_n)>1$, by Markov's inequality,
\begin{align*}
\mathbb{P}\bigl(S_{k_n}\ge A k_n^2\log n\bigr)
&=\mathbb{P}\Bigl((\sec(\mu/k_n))^{S_{k_n}}
\ge (\sec(\mu/k_n))^{A k_n^2\log n}\Bigr)\\
&\le (\cos(\mu/k_n))^{A k_n^2\log n}\Psi_{k_n}(\mu/k_n)\\
&\le C\exp\Bigl(Ak_n^2\log n\cdot\log\cos(\mu/k_n)\Bigr)
=Cn^{-A\mu^2/2+o(1)}.
\end{align*}
The last equality follows from the expansion $\log\cos x=-x^2/2+O(x^4)$ as $x\to 0$. Since $k_n\sim q^n$, we
have $\log n/k_n^2\to 0$, and hence
\begin{equation*}
Ak_n^2\log n\cdot\log\cos(\mu/k_n)
=-\frac{A\mu^2}{2}\log n+o(1)
=\left(-\frac{A\mu^2}{2}+o(1)\right)\log n.
\end{equation*}
Thus the series $\sum_{n=1}^{\infty}Cn^{-A\mu^2/2+o(1)}$ is summable as soon as $A>2/\mu^2$. It follows from the Borel-Cantelli lemma that
\begin{equation*}
\limsup_{n\to\infty}\frac{S_{k_n}}{k_n^2\log n}\le A
\quad {\rm a.s.}
\end{equation*}
Since $A>2/\mu^2$ is arbitrary and $\log\log k_n\sim \log n$,
taking $A>2/\mu^2$ to be rational
and letting
$\mu\uparrow\pi$ along a sequence yield
\begin{equation*}
\limsup_{n\to\infty}\frac{S_{k_n}}{k_n^2\log\log k_n}\le \frac{2}{\pi^2}
\quad\text{a.s.}
\end{equation*}

Now fix $k$ large and choose $n$ such that $k_n\le k<k_{n+1}$. Since $S_k$ is increasing and $k^2\log\log k\ge
k_n^2\log\log k_n$ for all large $k$,
\begin{equation*}
\frac{S_k}{k^2\log\log k}
\le \frac{S_{k_{n+1}}}{k_n^2\log\log k_n}
=\Bigl(\frac{k_{n+1}}{k_n}\Bigr)^2\frac{\log\log k_{n+1}}{\log\log k_n}
\cdot \frac{S_{k_{n+1}}}{k_{n+1}^2\log\log k_{n+1}}.
\end{equation*}
Taking the limsup and using $k_{n+1}/k_n\to q$ and $\log\log k_{n+1}/\log\log k_n\to 1$, we obtain
\begin{equation*}
\limsup_{k\to\infty}\frac{S_k}{k^2\log\log k}\le q^2\frac{2}{\pi^2}
\quad {\rm a.s.}
\end{equation*}
Finally, letting $q\downarrow1$ along a sequence yields the result.
The proof is complete.
\end{proof}

\subsection{Proof of Proposition \ref{pro-3.2}}\label{sec-3.3}

We prove the lower Chung bound through a tail estimate for the exit time of a killed simple random walk.
 Recall that for each $i\geq 1$,  $\tau_i$ stands for the hitting time of $\{-1,i-1\}$ of a simple symmetric random walk started from $0$ (cf. Lemma \ref{lem-2.1}).

\begin{lem}\label{lem-3.4}
For any integers $i\ge 2$ and $m\ge 1$, we have
\begin{align}\label{lem-3.4-a}
\mathbb{P}(\tau_i>2m)\ge \frac{2}{i}\cos^{2m}\!\Bigl(\frac{\pi}{i}\Bigr).
\end{align}
\end{lem}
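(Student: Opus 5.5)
The plan is to use the principal Dirichlet eigenfunction of the simple random walk on the interval, in the same spirit as the cosine martingale in the proof of Lemma \ref{lem-2.2}. Translate the walk by one unit, as in that proof. Then $\tau_i$ becomes the exit time from $\{1,\dots,i-1\}$, that is, the hitting time of $\{0,i\}$, for a simple symmetric random walk $(Z_n)_{n\ge0}$ started from $Z_0=1$.

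Set $\varphi(x):=\sin(\pi x/i)$. This function vanishes at $x=0$ and $x=i$, is strictly positive on $\{1,\dots,i-1\}$, and satisfies
\[
\tfrac12\bigl(\varphi(x+1)+\varphi(x-1)\bigr)=\cos(\pi/i)\,\varphi(x).
\]

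The key step is the identity
\begin{equation*}
\mathbb{E}\bigl[\varphi(Z_n)\mathbf 1_{\{\tau_i>n\}}\bigr]=\cos^n(\pi/i)\,\varphi(1),\qquad n\ge0.
\end{equation*}
I would prove it by induction on $n$. Condition on $\mathcal F_n$ on the event $\{\tau_i>n\}$ and apply the eigenfunction relation. The walk is killed exactly when it steps onto $\{0,i\}$, where $\varphi=0$, so
\[
\mathbb{E}\bigl[\varphi(Z_{n+1})\mathbf 1_{\{\tau_i>n+1\}}\mid\mathcal F_n\bigr]=\cos(\pi/i)\,\varphi(Z_n)\mathbf 1_{\{\tau_i>n\}}.
\]
Equivalently, $(\sec(\pi/i))^{n\wedge\tau_i}\varphi(Z_{n\wedge\tau_i})$ is a martingale, and it vanishes at time $\tau_i$. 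For $i=2$ we have $\cos(\pi/2)=0$, the right-hand side of \eqref{lem-3.4-a} is $0$, and the claim is trivial, so I may assume $i\ge3$. This bookkeeping, namely that killing costs nothing because $\varphi$ vanishes on the boundary, is the only point needing care. I do not expect a real obstacle.

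Next, bound the left-hand side above using $0\le\varphi\le1$ on the interior, which gives
\[
\mathbb{E}\bigl[\varphi(Z_n)\mathbf 1_{\{\tau_i>n\}}\bigr]\le\mathbb{P}(\tau_i>n).
\]
Hence $\mathbb{P}(\tau_i>n)\ge\sin(\pi/i)\cos^n(\pi/i)$.

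Finally, take $n=2m$ and use Jordan's inequality $\sin x\ge 2x/\pi$ for $x\in[0,\pi/2]$. Since $\pi/i\in(0,\pi/2]$ for $i\ge2$, this gives $\sin(\pi/i)\ge 2/i$, and \eqref{lem-3.4-a} follows. The even time $2m$ plays no special role; the bound holds for every $n$. Presumably it is stated this way for later use, and also so that $\cos^{2m}$ is manifestly nonnegative.
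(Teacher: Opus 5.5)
Your proof is correct, but it takes a different route from the paper. The paper works with the full spectral decomposition of the killed kernel on $\{1,\dots,i-1\}$, using all $i-1$ Dirichlet eigenfunctions $\sqrt{2/i}\,\sin(r\pi x/i)$. It sums over the endpoint $y$ and then checks that every term $\cos^{2m}(r\pi/i)\sin(r\pi/i)\sum_{y}\sin(r\pi y/i)$ is nonnegative. Two facts drive that check: the $y$-sum vanishes for even $r$, and the even exponent $2m$ neutralizes the negative eigenvalues $\cos(r\pi/i)<0$ for $r>i/2$. It then discards every mode except $r=1$, which gives $\mathbb{P}(\tau_i>2m)\ge \frac{2}{i}\bigl(1+\cos(\pi/i)\bigr)\cos^{2m}(\pi/i)$.

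You use only the principal eigenfunction, through the identity $\mathbb{E}[\varphi(Z_n)\mathbf 1_{\{\tau_i>n\}}]=\cos^n(\pi/i)\varphi(1)$. This is the same device as the cosine martingale in the proof of Lemma~\ref{lem-2.2}, followed by the crude bound $\varphi\le 1$. Your route avoids the orthonormal basis, the sign analysis of the higher modes, and any parity restriction. It yields $\mathbb{P}(\tau_i>n)\ge \sin(\pi/i)\cos^n(\pi/i)$ for every $n$.

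The cost is a slightly smaller prefactor: about $\pi/i$, against the paper's $r=1$ spectral coefficient of about $4/i$. The bound $\varphi\le 1$ does not recover the projection onto the constant function. Both prefactors dominate $2/i$, and the proof of Proposition~\ref{pro-3.2} uses only the order $1/i$ and the rate $\cos^{2m}(\pi/i)$. Your remark that the even time plays no role is right for your method, but in the paper's method the parity is genuinely needed.
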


\begin{proof}
Fix integers $i\ge 2$ and $m\ge 1$.
Let $Z$ be a simple symmetric random walk, and write
$\mathbb{P}_x$ for its law when $Z_0=x$. Set
\begin{equation*}
\sigma_i:=\inf\{n\ge 0:Z_n\in\{0,i\}\}.
\end{equation*}
Under $\mathbb{P}_1$, with $\widetilde Z_n:=Z_n-1$, the process $\widetilde Z$ is a simple symmetric random walk started
from $0$, and $\sigma_i=\inf\{n\ge 0:\widetilde Z_n\in\{-1,i-1\}\}$. Consequently, the law of $\tau_i$ is the
same as that of $\sigma_i$ under $\mathbb{P}_1$.

Let $E:=\{1,\dots,i-1\}$ and
$K$
be the killed transition kernel on $E$,
\begin{align*}
K(x,y):=\mathbb{P}_x(Z_1=y,\sigma_i>1)
=\frac{1}{2}\mathbf 1_{\{|x-y|=1\}},\quad x,y\in E.
\end{align*}
Then
\begin{equation*}
\mathbb{P}_x(Z_n=y,\sigma_i>n)=K^n(x,y).
\end{equation*}
For $1\le r\le i-1$, define
\begin{align*}
\phi_r(x):=\sqrt{\frac{2}{i}}\sin\!\Bigl(\frac{r\pi x}{i}\Bigr), \quad x\in E.
\end{align*}
By the one-dimensional case of \cite[Section~8.2]{LL2010},  the functions $\{\phi_r, r=1,\ldots,i-1\}$ form an orthonormal basis of $\ell^2(E)$, with eigenvalues $\cos(r\pi/i)$ for $K$.
By the finite dimensional spectral theorem
(see,~e.g., \cite[Chapter~XVI.1]{Feller1968}),
\begin{equation*}
K^n(x,y)=\sum_{r=1}^{i-1}\cos^n\!\Bigl(\frac{r\pi}{i}\Bigr)\phi_r(x)\phi_r(y),
\end{equation*}
that is, for $x,y\in E$,
\begin{align*}
\mathbb{P}_x(Z_n=y,\sigma_i>n)
&=\frac{2}{i}\sum_{r=1}^{i-1}
\cos^n\!\Bigl(\frac{r\pi}{i}\Bigr)
\sin\!\Bigl(\frac{r\pi x}{i}\Bigr)
\sin\!\Bigl(\frac{r\pi y}{i}\Bigr).
\end{align*}
Summing over all interior states at even time $2m$, we obtain
\begin{equation}\label{3.2}
\begin{aligned}
&\mathbb{P}(\tau_i>2m)
=\mathbb{P}_1(\sigma_i>2m)
=\sum_{y\in E}K^{2m}(1,y)\\
&=\frac{2}{i}\sum_{r=1}^{i-1}
\cos^{2m}\!\Bigl(\frac{r\pi}{i}\Bigr)
\sin\!\Bigl(\frac{r\pi}{i}\Bigr)
\sum_{y=1}^{i-1}\sin\!\Bigl(\frac{r\pi y}{i}\Bigr).
\end{aligned}
\end{equation}
The elementary identity $\sum_{y=1}^{i-1}\sin(y\theta)=\sin((i-1)\theta/2)\sin(i\theta/2)/\sin(\theta/2)$, applied at
$\theta=r\pi/i$, yields, for $1\le r\le i-1$,
\begin{equation*}
\sum_{y=1}^{i-1}\sin\!\Bigl(\frac{r\pi y}{i}\Bigr)
=\left\{
\begin{array}{cl}
0, & \text{if } r \text{ is even},\\
\cot\!\bigl(\frac{r\pi}{2i}\bigr)>0,
& \text{if } r \text{ is odd}.
\end{array}
\right.
\end{equation*}
Together with $\cos^{2m}(r\pi/i)\ge0$ and $\sin(r\pi/i)>0$, this shows that every term in the spectral sum above is
nonnegative. Therefore, retaining only the $r=1$ term in \eqref{3.2} and using the case $r=1$ of the identity above, we obtain
\begin{align}\label{lem-3.4-b}
&\mathbb{P}(\tau_i>2m)
\ge \frac{2}{i}\cos^{2m}\!\Bigl(\frac{\pi}{i}\Bigr)\sin\!\Bigl(\frac{\pi}{i}\Bigr)
\sum_{y=1}^{i-1}\sin\!\Bigl(\frac{\pi y}{i}\Bigr)\nonumber\\
&=\frac{2}{i}\cos^{2m}\!\Bigl(\frac{\pi}{i}\Bigr)\sin\!
\Bigl(\frac{\pi}{i}\Bigr)\cot\!\Bigl(\frac{\pi}{2i}\Bigr)
=\frac{4}{i}\cos^2\!\Bigl(\frac{\pi}{2i}\Bigr)\cos^{2m}\!\Bigl(\frac{\pi}{i}\Bigr).
\end{align}
Since $i\ge 2$, we have $\pi/(2i)\le\pi/4$ and hence $4\cos^2(\pi/(2i))\ge 2$.  Then by (\ref{lem-3.4-b}), we get (\ref{lem-3.4-a}). The proof is complete.
\end{proof}

\begin{proof}[Proof of Proposition \ref{pro-3.2}]
Fix $0<A<2/\pi^2$. For all sufficiently large $i$, set
$r_i:=\left\lfloor \frac{Ai^2\log\log i}{2}\right\rfloor$, $F_i:=\{T_i\ge 2r_i+2\}$, and
$E_i:=\{Y_i\ge 1,\ \tau_i^{(1)}>2r_i\}$. We take 
$i\ge 2$ large enough so that $r_i\ge 1$. 
By the definition of
$r_i$, we have
\begin{equation}\label{3.3}
2r_i+2>Ai^2\log\log i,\qquad
2r_i=Ai^2\log\log i+O(1).
\end{equation}
Since the increments $(T_i)_{i\ge 1}$ are independent and $F_i\in\sigma(T_i)$, the events  $(F_i)_{i\geq 1}$ are independent. Moreover, by \eqref{2.1}, on the event $E_i$ we have
\begin{equation*}
T_i\ge 1+(1+\tau_i^{(1)})>2r_i+2,
\end{equation*}
so $E_i\subset F_i$. By \eqref{2.2}, on the event $F_i$ we have $S_{i+1}>T_i$, and
 thus
\begin{equation}\label{3.4}
S_{i+1}>T_i\ge 2r_i+2>Ai^2\log\log i.
\end{equation}

Set $\alpha:=A\pi^2/2<1$ and choose $\varepsilon>0$ such that $\alpha+\varepsilon<1$. By \eqref{3.3} and
$\log\cos(\pi/i)=-\pi^2/(2i^2)+O(i^{-4})$, we have
\begin{equation}\label{3.4-1}
2r_i\log\cos\!\Bigl(\frac{\pi}{i}\Bigr)=(-\alpha+o(1))\log\log i.
\end{equation}
By the independence in Lemma \ref{lem-2.1}, Lemma \ref{lem-3.4}  and \eqref{3.4-1}, we have
\begin{align*}
&\mathbb{P}(F_i)
\ge \mathbb{P}(E_i)
= \mathbb{P}(Y_i\ge 1)\,\mathbb{P}(\tau_i^{(1)}>2r_i)\\
&\ge \frac{c}{1+c}\cdot \frac{2}{i}\cos^{2r_i}\!\Bigl(\frac{\pi}{i}\Bigr)
=\frac{2c}{(1+c)i}\exp\Bigl(2r_i\log\cos\!\Bigl(\frac{\pi}{i}\Bigr)\Bigr)\\
&=\frac{2c}{(1+c)i}\exp\bigl((-\alpha+o(1))\log\log i\bigr)
=\frac{2c}{(1+c)i}(\log i)^{-\alpha+o(1)}
\ge \frac{C_1}{i(\log i)^{\alpha+\varepsilon}},
\end{align*}
for some constant $C_1>0$. Since $\alpha+\varepsilon<1$, we have $\sum_i\mathbb{P}(F_i)=\infty$. By the
Borel-Cantelli lemma, $F_i$ occurs infinitely often almost surely. On this event, \eqref{3.4} implies that, for
infinitely many $i$,
\begin{equation*}
\frac{S_{i+1}}{(i+1)^2\log\log(i+1)}
> A\frac{i^2\log\log i}{(i+1)^2\log\log(i+1)},
\end{equation*}
 which together with $\lim_{i\to\infty}\frac{i^2\log\log i}{(i+1)^2\log\log(i+1)}=1$ implies that
\begin{equation*}
\limsup_{k\to\infty}\frac{S_k}{k^2\log\log k}\ge A
\quad\text{a.s.}
\end{equation*}
This is true for every $0<A<2/\pi^2$.  It follows that \eqref{pro-3.2-a} holds and the proof is complete.
\end{proof}

\section{Proof of Theorem \ref{thm-1.2}}\label{sec-4}

 In this section, we give the proof of Theorem \ref{thm-1.2}. We need the following two propositions, whose proofs are postponed to Sections \ref{sec-4.2} and \ref{sec-4.3} respectively.
Recall that $S_k$ is the inverse range of the once-reinforced random walk on $\mathbb Z$ with reinforcement parameter $c>0$.

\begin{pro}\label{pro-4.1}
It holds that
\begin{equation}\label{pro-4.1-a}
\liminf_{k\to\infty}\frac{S_k\log\log k}{k^2}\ge \frac{c^2}{2}\quad{\rm a.s.}
\end{equation}
\end{pro}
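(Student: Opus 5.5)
We use the hyperbolic generating function \eqref{2.3} with an exponential Chebyshev bound, Borel--Cantelli along a geometric subsequence, and monotone interpolation.

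\textbf{Step 1: tail bound for a fixed $k$.} For $\lambda>0$ and $\theta>0$, Markov's inequality applied to $(\operatorname{sech}\theta)^{S_k}$ gives
\begin{equation*}
\mathbb P(S_k\le x)\le (\cosh\theta)^{x}\,\Phi_k(\theta).
\end{equation*}
Take $x=bk^2/L$ with $L:=\log\log k$ and $b<c^2/2$. Choose $\theta=\mu L/k$ with $\mu>0$ to be optimized, so that $\theta\to0$ but $k\theta=\mu L\to\infty$.

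\textbf{Step 2: asymptotics of the logarithm.} By \eqref{2.3},
\begin{equation*}
\log\Phi_k(\theta)=-k\log\cosh\theta-\sum_{i=1}^{k-1}\log\bigl(1+c\tanh\theta\tanh(i\theta/2)\bigr).
\end{equation*}
Since $c\tanh\theta\to0$, the logarithm in the sum equals $c\tanh\theta\tanh(i\theta/2)(1+o(1))$.

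The Riemann-sum argument gives
\begin{equation*}
\sum_{i=1}^{k-1}\tanh(i\theta/2)=k-O(1/\theta),
\end{equation*}
because $\tanh(i\theta/2)$ is close to $1$ except for $i\lesssim1/\theta=k/(\mu L)$. Hence the sum is $c\theta k(1+o(1))=c\mu L(1+o(1))$. The term $k\log\cosh\theta=O(L^2/k)$ is negligible. Also
\begin{equation*}
x\log\cosh\theta\approx \frac{bk^2}{L}\cdot\frac{\theta^2}{2}=\frac{b\mu^2L}{2}.
\end{equation*}
Therefore
\begin{equation*}
\log\mathbb P(S_k\le bk^2/L)\le L\Bigl(\frac{b\mu^2}{2}-c\mu+o(1)\Bigr).
\end{equation*}
Optimizing at $\mu=c/b$ gives exponent $-\frac{c^2}{2b}L$, that is,
\begin{equation*}
\mathbb P(S_k\le bk^2/\log\log k)\le (\log k)^{-c^2/(2b)+o(1)}.
\end{equation*}
This matches the $-c\mu$ and $c^2/2$ mentioned in the introduction. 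The $o(1)$ errors must be checked to be uniform; the $O(1/\theta)$ correction contributes $O(c)$ to the exponent, which is $o(L)$.

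\textbf{Step 3: Borel--Cantelli and interpolation.} Since $b<c^2/2$, we have $c^2/(2b)>1$. Take $k_n=\lfloor q^n\rfloor$ with $q>1$. Then $\log k_n\sim n\log q$, so the bounds $(n\log q)^{-c^2/(2b)+o(1)}$ are summable. Borel--Cantelli then gives $S_{k_n}> bk_n^2/\log\log k_n$ eventually.

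For $k_n\le k<k_{n+1}$, monotonicity gives $S_k\ge S_{k_n}$, so
\begin{equation*}
\frac{S_k\log\log k}{k^2}\ge \frac{S_{k_n}\log\log k_n}{k_n^2}\cdot\frac{k_n^2}{k_{n+1}^2}(1+o(1)).
\end{equation*}
Hence the $\liminf$ is at least $b/q^2$. Letting $q\downarrow1$ and $b\uparrow c^2/2$ along countable sequences yields \eqref{pro-4.1-a}.

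\textbf{Main obstacle.} The main obstacle is Step 2. It requires precise control that the sum $\sum_i\tanh(i\theta/2)$ deviates from $k$ only by $O(1/\theta)=O(k/L)$, which is negligible relative to $k$. It also requires that the quadratic correction in $\log(1+y)$, of order $k\theta^2=\mu^2L^2/k$, vanishes. Both hold in the regime $\theta=\mu\log\log k/k$.
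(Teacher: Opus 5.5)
Your proposal is correct and follows essentially the same route as the paper: an exponential Markov bound using the hyperbolic generating function \eqref{2.3} at $\theta\asymp \log\log k/k$, the asymptotic $\log\Phi_k(\theta)=-ck\theta(1+o(1))$ (packaged in the paper as Lemma~\ref{lem-4.3}), Borel--Cantelli along $k_n=\lfloor q^n\rfloor$, and monotone interpolation. The differences are cosmetic: the paper parametrizes by $\mu_n=b\log n$ and picks any $b$ with $Ab^2/2-cb<-1$ instead of optimizing explicitly, and it gets the sum asymptotics from the integral comparison \eqref{e:er} rather than from your bound $\sum_{i<k}\bigl(1-\tanh(i\theta/2)\bigr)=O(1/\theta)$.
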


\begin{pro}\label{pro-4.2}
It holds that 
\begin{equation}\label{pro-4.2-a}
\liminf_{k\to\infty}\frac{S_k\log\log k}{k^2}\le \frac{c^2}{2}\quad{\rm a.s.}
\end{equation}
\end{pro}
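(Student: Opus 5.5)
The plan is to combine independent sparse blocks, in the spirit of the Borel--Cantelli argument for Proposition \ref{pro-3.2}, with a sharp small-deviation lower bound obtained by exponential tilting of the hyperbolic generating function \eqref{2.3}. Fix $b>c^2/2$. Put $L(x)=\log\log x$ and take the very sparse sequence $k_n:=n^n$. Define the block sums
\[
U_n:=S_{k_n}-S_{k_{n-1}}=\sum_{i=k_{n-1}}^{k_n-1}T_i .
\]
By Lemma \ref{lem-2.1} the $U_n$ are independent. By Proposition \ref{pro-3.1}, almost surely $S_{k_{n-1}}\le C k_{n-1}^2L(k_{n-1})$ for large $n$. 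Since $k_{n-1}^2/k_n^2\lesssim n^{-2}$ while $L(k_n)^2\asymp(\log n)^2$, this gives $S_{k_{n-1}}=o(k_n^2/L(k_n))$. It therefore suffices to show that
\[
\sum_n\mathbb P\bigl(U_n\le b\,k_n^2/L(k_n)\bigr)=\infty .
\]
The second Borel--Cantelli lemma then yields $\liminf_k S_kL(k)/k^2\le b$ a.s., and we let $b\downarrow c^2/2$. Because $\log k_n=n\log n$, it is enough to prove $\mathbb P(U_n\le bk_n^2/L(k_n))\ge (\log k_n)^{-\gamma}$ for some $\gamma<1$ and all large $n$.

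\textbf{Laplace transform of a block.} By \eqref{2.8-a},
\[
\log\mathbb E\bigl[(\operatorname{sech}\theta)^{U_n}\bigr]=-\sum_{i=k_{n-1}}^{k_n-1}\Bigl[\log\cosh\theta+\log\bigl(1+c\tanh\theta\tanh(i\theta/2)\bigr)\Bigr].
\]
Take $\theta=x/k$ with $k=k_n$ and $x\to\infty$ slowly, of order $L(k)$. Then $\tanh(i\theta/2)=1-O(e^{-i\theta})$, the terms with $i\lesssim k/x$ contribute only $O(c\theta\cdot k/x)=O(1)$, and the deletion of $i<k_{n-1}$ is also negligible. Hence, with $s:=-\log\operatorname{sech}\theta\approx x^2/(2k^2)$,
\[
\Lambda_n(s):=\log\mathbb E\bigl[e^{-sU_n}\bigr]=-\frac{x^2}{2k}-c\,x+O(1)+O(x^3/k^2)=-c\,k\sqrt{2s}\,(1+o(1)).
\]
The Legendre optimization of $st+\Lambda_n(s)$ at $t=bk^2/L(k)$ is attained at $x^*=cL(k)/b$. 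It gives the heuristic
\[
\mathbb P(U_n\le t)\approx\exp\bigl(-c^2k^2/(2t)\bigr)=(\log k)^{-c^2/(2b)}.
\]
The exponent $c^2/(2b)$ is below $1$ exactly when $b>c^2/2$.

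\textbf{Rigorous lower bound by tilting.} Pick $b'\in(c^2/2,b)$ and the corresponding $x=cL(k)/b'$, $s=-\log\operatorname{sech}(x/k)$. Define $d\mathbb Q_n=e^{-sU_n}/\mathbb E[e^{-sU_n}]\,d\mathbb P$. Under $\mathbb Q_n$ the $T_i$ remain independent, with transforms again given by \eqref{2.8-a} at shifted $\theta$. This yields two facts:
\begin{enumerate}
\item the mean is $\mathbb E_{\mathbb Q_n}U_n=-\Lambda_n'(s)\approx ck/\sqrt{2s}=ck^2/x=b'k^2/L(k)$;
\item the variance is $\Lambda_n''(s)\asymp k^3/x^3$, so the standard deviation is $k^{3/2}x^{-3/2}=o(k^2/L(k))$.
\end{enumerate}
By Chebyshev, $\mathbb Q_n\bigl(U_n\in[b'k^2/L,\,bk^2/L]\bigr)\ge 1/2$ for large $n$. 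On that event $e^{sU_n}\le e^{sbk^2/L}$, so
\[
\mathbb P(U_n\le bk^2/L)\ge \tfrac12\exp\bigl(\Lambda_n(s)-s\,b k^2/L\bigr)=(\log k)^{-c^2/b'+c^2b/(2b'^2)+o(1)}.
\]
The exponent $\gamma:=\frac{c^2}{b'}-\frac{c^2b}{2b'^2}$ tends to $c^2/(2b)<1$ as $b'\uparrow b$. Choosing $b'$ close to $b$ therefore makes $\gamma<1$, which is what is needed.

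\textbf{Main obstacle.} I expect the hardest step to be the uniform asymptotics of $\Lambda_n,\Lambda_n',\Lambda_n''$ in the regime $\theta=x/k$ with $x\to\infty$. One must check that the boundary terms, where $\tanh(i\theta/2)$ is far from $1$, and the $O(\theta^4)$ corrections in $\log\cosh$ contribute only $o(x)$ to $\Lambda_n$ and lower-order amounts to its derivatives. I would handle this by writing $\sum_i\tanh(i\theta/2)=(k_n-k_{n-1})-\sum_i\bigl(1-\tanh(i\theta/2)\bigr)$ and bounding the last sum by $\sum_{i\ge1}2e^{-i\theta}=O(1/\theta)$. The derivatives are treated the same way, differentiating termwise.
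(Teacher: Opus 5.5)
Your architecture is the paper's: blocks along $k_n=n^n$, independence of $U_n=S_{k_n}-S_{k_{n-1}}$, Proposition~\ref{pro-3.1} to get $S_{k_{n-1}}=o(k_n^2/L(k_n))$, an exponential tilt of the block transform built from \eqref{2.8-a}, and the second Borel--Cantelli lemma. The gap is in the last step, the lower bound via the tilt, which as written is false. You have
\[
\mathbb P\bigl(U_n\le bk^2/L\bigr)=e^{\Lambda_n(s)}\,\mathbb E_{\mathbb Q_n}\bigl[e^{sU_n}\mathbf 1_{\{U_n\le bk^2/L\}}\bigr].
\]
A lower bound therefore needs a \emph{lower} bound for $e^{sU_n}$ on a sub-event. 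The bound $e^{sU_n}\le e^{sbk^2/L}$ that you invoke points the wrong way. The expression $\exp(\Lambda_n(s)+s\,bk^2/L)$ is what produces your exponent; your display even has $-s\,bk^2/L$. That expression is the Chernoff \emph{upper} bound at a non-optimal $s$. Indeed
\[
\gamma=\frac{c^2}{b'}-\frac{c^2b}{2b'^2}=\frac{c^2}{2b}-\frac{c^2(b-b')^2}{2bb'^2}<\frac{c^2}{2b},
\]
whereas minimizing $s\,bk^2/L+\Lambda_n(s)$ over $s$ gives $\mathbb P(U_n\le bk^2/L)\le(\log k)^{-c^2/(2b)+o(1)}$. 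So your claimed lower bound is false for large $n$. With the sign as displayed, the inequality is true but useless: its exponent exceeds $3c^2/(2b)>1$ for $b$ near $c^2/2$. There is a second problem. The left endpoint $b'k^2/L$ of your window is, to leading order, the $\mathbb Q_n$-mean itself. Chebyshev only controls two-sided deviations, so it cannot give $\mathbb Q_n\bigl(U_n\in[b'k^2/L,bk^2/L]\bigr)\ge 1/2$.

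The repair is to center the window at the tilted mean. Take $\varepsilon>0$ with $(1+\varepsilon)b'\le b$ and set $W_n:=\bigl[(1-\varepsilon)b'k^2/L,\,(1+\varepsilon)b'k^2/L\bigr]$. Your mean and variance estimates give $\mathbb Q_n(W_n)\to1$. On $W_n$ one has $e^{sU_n}\ge e^{s(1-\varepsilon)b'k^2/L}$, hence
\[
\mathbb P\bigl(U_n\le bk^2/L\bigr)\ge \mathbb Q_n(W_n)\exp\Bigl(\Lambda_n(s)+s(1-\varepsilon)\frac{b'k^2}{L}\Bigr)=(\log k)^{-(1+\varepsilon)c^2/(2b')+o(1)}.
\]
The exponent is $<1$ as soon as $b'>c^2/2$ and $\varepsilon$ is small; there is no need to let $b'\uparrow b$. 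This is exactly the paper's event $F_n$ in \eqref{4.17}, with exponent $\alpha=(1+\varepsilon)c^2/(2A)$. After this fix, your argument differs from the paper's only in how concentration under $\mathbb Q_n$ is proved. The paper (Lemma~\ref{lem-4.5}) compares the tilts at $(1\pm\varepsilon)\mu_n$. That needs only the first-order asymptotics $H_n(\mu)=-c\mu+o(\mu)$ of Lemma~\ref{lem-4.4}, and no derivatives. Your Chebyshev route needs asymptotics for $\Lambda_n'$ and $\Lambda_n''$, which is the very step you flag as the main obstacle. Your variance is also off by a factor $k$: $\Lambda_n''(s)\approx ck(2s)^{-3/2}\asymp k^4/x^3$, so the standard deviation is of order $k^2x^{-3/2}$. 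This is still $o(k^2/L)$ because $x\asymp L\to\infty$, so the concentration conclusion survives.
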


\smallskip

\begin{proof}[Proof of Theorem \ref{thm-1.2}]
By Propositions \ref{pro-4.1} and \ref{pro-4.2}, we get
\begin{equation*}
\liminf_{k\to\infty}\frac{S_k\log\log k}{k^2}=\frac{c^2}{2}\quad{\rm a.s.,}
\end{equation*}
 which together with Lemma \ref{lem-2.4}(ii) implies that
\begin{equation*}
\limsup_{n\to\infty}\frac{R_n}{\sqrt{n\log\log n}}
= \left(\frac{c^2}{2}\right)^{-1/2}=\frac{\sqrt 2}{c}\quad {\rm a.s.}
\end{equation*}
Equivalently,
\begin{equation*}
\limsup_{n\to\infty}\frac{cR_n}{\sqrt{2n\log\log n}}=1\quad{\rm a.s.}
\end{equation*}
The proof is complete.
\end{proof}

\subsection{Proof of Proposition \ref{pro-4.1}}\label{sec-4.2}

We prove Proposition \ref{pro-4.1} by adapting the generating function argument in Serlet's hitting time estimates
\cite[Section~6, Theorem~17]{Serlet2013} to the hyperbolic generating function \eqref{2.3}.
We first record the following elementary result. 
For $h>0$ and integers $0\le a<k$, monotonicity and $0\le\tanh u\le1$ for $u\ge0$ give
\begin{align}\label{e:er}
0\le \int^{kh}_{ah}\tanh u\,  du-h\sum^{k-1}_{i=a}\tanh (ih)\le h.
\end{align}

\begin{lem}\label{lem-4.3}
If $\mu_k\to\infty$ and $\mu_k=o(k)$, then
\begin{equation*}
\log\Phi_k(\mu_k/k)=-2c\log\cosh(\mu_k/2)+o(\mu_k)=-c\mu_k+o(\mu_k)\quad \mbox{as}\ k\to\infty.
\end{equation*}
\end{lem}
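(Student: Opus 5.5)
Fix $\theta=h:=\mu_k/k$, so $h\to0$ and $kh=\mu_k\to\infty$. By \eqref{2.3},
\begin{equation*}
\log\Phi_k(h)=-k\log\cosh h-\sum_{i=1}^{k-1}\log\bigl(1+c\tanh h\,\tanh(ih/2)\bigr),
\end{equation*}
and I would treat the two pieces separately. The first piece is harmless: $\log\cosh h=h^2/2+O(h^4)$, so $k\log\cosh h=O(\mu_k^2/k)=o(\mu_k)$ because $\mu_k=o(k)$.

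For the sum, write $x_i:=c\tanh h\,\tanh(ih/2)$. These satisfy $0\le x_i\le c\tanh h\le ch\to0$ uniformly in $i$. Using $x-x^2/2\le\log(1+x)\le x$ on $[0,\infty)$, I get
\begin{equation*}
\sum_{i=1}^{k-1}\log(1+x_i)=\sum_{i=1}^{k-1}x_i+O\Bigl(\sum_i x_i^2\Bigr),
\qquad \sum_i x_i^2\le k c^2h^2=c^2\mu_k^2/k=o(\mu_k).
\end{equation*}
So it remains to evaluate $c\tanh h\sum_{i=1}^{k-1}\tanh(ih/2)$. I apply \eqref{e:er} with step $h/2$ and $a=0$ (the $i=0$ term vanishes):
\begin{equation*}
\frac h2\sum_{i=1}^{k-1}\tanh(ih/2)=\int_0^{kh/2}\tanh u\,du+O(h)=\log\cosh(\mu_k/2)+O(h).
\end{equation*}
Hence $\sum_i\tanh(ih/2)=(2/h)\log\cosh(\mu_k/2)+O(1)$.

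Since $\tanh h=h(1+O(h^2))$, multiplying by $c\tanh h$ gives
\begin{equation*}
\sum_i x_i=2c\log\cosh(\mu_k/2)\bigl(1+O(h^2)\bigr)+O(h).
\end{equation*}
Because $\log\cosh(\mu_k/2)\le\mu_k/2$, the error term $O(h^2)\cdot\mu_k$ is $o(\mu_k)$, and $O(h)=o(1)$. Collecting the pieces yields the first equality,
\begin{equation*}
\log\Phi_k(\mu_k/k)=-2c\log\cosh(\mu_k/2)+o(\mu_k).
\end{equation*}
The second equality follows from $\log\cosh y=y-\log2+\log(1+e^{-2y})=y+O(1)$ as $y\to\infty$. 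This gives $2c\log\cosh(\mu_k/2)=c\mu_k+O(1)$, and $O(1)=o(\mu_k)$ since $\mu_k\to\infty$.

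There is no serious obstacle here. The only points needing care are the bookkeeping of error sizes and the role of each hypothesis. The quadratic error and the $k\log\cosh h$ term each need $\mu_k^2/k=o(\mu_k)$, which is exactly $\mu_k=o(k)$. The replacement of $\log\cosh$ by a linear function needs $\mu_k\to\infty$. I would state the Riemann-sum comparison via \eqref{e:er} explicitly, with the rescaling $u=ih/2$.
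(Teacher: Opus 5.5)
Your proposal is correct and follows the paper's proof essentially step for step. Both expand \eqref{2.3} at $\theta=\mu_k/k$, bound $k\log\cosh(\mu_k/k)$ and the quadratic Taylor remainder by $O(\mu_k^2/k)=o(\mu_k)$, and convert $\sum_i\tanh(i\theta/2)$ into $\int_0^{\mu_k/2}\tanh u\,du$ via \eqref{e:er}. Your bookkeeping is slightly more explicit than the paper's: you carry $\tanh h=h(1+O(h^2))$ as a multiplicative factor and use the sharper linearization $\log\cosh y=y+O(1)$. Neither change affects the argument.
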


\begin{proof}
It suffices to consider all sufficiently large $k$. For such $k$, the parameter $\theta_k:=\mu_k/k$ is positive and
converges to $0$. Applying \eqref{2.3} with $\theta=\theta_k$, we obtain
\begin{equation*}
\log\Phi_k(\mu_k/k)
=k\log\operatorname{sech}\theta_k-\sum_{i=1}^{k-1}\log\Bigl(1+c\tanh\theta_k\tanh(i\theta_k/2)\Bigr).
\end{equation*}
The contribution of the first term is $o(\mu_k)$  since
\begin{equation*}
k\log\operatorname{sech}\theta_k=O(k\theta_k^2)=O(\mu_k^2/k).
\end{equation*}
Moreover,
\begin{equation*}
\sup_{1\le i\le k-1}c\tanh\theta_k\tanh(i\theta_k/2)\le c\tanh\theta_k\to0.
\end{equation*}
Thus, by Taylor expansion of $\log(1+x)$ at $0$,  we have
\begin{align*}
&\sum_{i=1}^{k-1}\log\Bigl(1+c\tanh\theta_k\tanh(i\theta_k/2)\Bigr)
=c\tanh\theta_k\sum_{i=1}^{k-1}\tanh(i\theta_k/2)+O(k\tanh^2\theta_k)\\
&=c\theta_k\sum_{i=1}^{k-1}\tanh(i\theta_k/2)+O(\frac{\mu^2_k}{k}).
\end{align*}
Hence, by \eqref{e:er},
\begin{align*}
&\log\Phi_k(\mu_k/k)
=-c\theta_k\sum_{i=1}^{k-1}\tanh(i\theta_k/2)+O(\frac{\mu^2_k}{k})\\
&=-2c\frac{\mu_k}{2k}\sum_{i=1}^{k-1}\tanh(i\mu_k/2k)+O(\frac{\mu^2_k}{k})
=-2c\int_0^{\mu_k/2}\tanh u\,du+O(\frac{\mu^2_k}{k}+\frac{\mu_k}{k})\\
&=-2c\log\cosh(\mu_k/2)+o(\mu_k).
\end{align*}
Since $\log\cosh(\mu_k/2)=\mu_k/2+o(\mu_k)$,
the final equality in the statement follows.
\end{proof}

\begin{proof}[Proof of Proposition \ref{pro-4.1}]
Fix $q>1$ and let $k_n:=\lfloor q^n\rfloor$. Take $A<c^2/2$, and choose $b>0$ such that
$Ab^2/2-cb<-1$. Set $\mu_n:=b\log n$. Then $\mu_n\to\infty$ and $\mu_n=o(k_n)$. Recall that
$\Phi_{k_n}(\mu_n/k_n)=\mathbb{E}[(\operatorname{sech}(\mu_n/k_n))^{S_{k_n}}]$. Hence, for all large $n$, since
$\operatorname{sech}(\mu_n/k_n)<1$, by Markov's inequality and Lemma \ref{lem-4.3},  we have
\begin{align*}
&\mathbb{P}\Bigl(S_{k_n}\le A\frac{k_n^2}{\log n}\Bigr)
\le \bigl(\cosh(\mu_n/k_n)\bigr)^{A k_n^2/\log n}\Phi_{k_n}(\mu_n/k_n)\\
&=\exp\!\left(\frac{A\mu_n^2}{2\log n}-c\mu_n+o(\log n)\right)
=\exp\!\left(\Bigl(\frac{Ab^2}{2}-cb+o(1)\Bigr)\log n\right)
= n^{\frac{Ab^2}{2}-cb+o(1)},
\end{align*}
 where in the first two equalities,  we used $\log\cosh x=x^2/2+o(x^2)$ as $x\to0$, $\mu_n=b\log n$, and
$o(\log n)=o(1)\log n$. The right hand side is summable, and
then the Borel-Cantelli
lemma implies that
\begin{equation*}
\liminf_{n\to\infty}\frac{S_{k_n}\log n}{k_n^2}\ge A
\quad\text{a.s.}
\end{equation*}
Since $A<c^2/2$ is arbitrary and $\log\log k_n\sim\log n$,
taking $A<c^2/2$ to be rational and  letting $A\uparrow c^2/2$ along a
 sequence yield
\begin{equation*}
\liminf_{n\to\infty}\frac{S_{k_n}\log\log k_n}{k_n^2}\ge \frac{c^2}{2}
\quad\text{a.s.}
\end{equation*}

Now fix $k$ large and choose $n$ such that $k_n\le k<k_{n+1}$. Since $S_k$ is increasing, $\log\log k\ge \log\log k_n$,
and $k^2\le k_{n+1}^2$, we have
\begin{equation*}
\frac{S_k\log\log k}{k^2}
\ge \frac{S_{k_n}\log\log k_n}{k_{n+1}^2}
=\Bigl(\frac{k_n}{k_{n+1}}\Bigr)^2
\frac{S_{k_n}\log\log k_n}{k_n^2}.
\end{equation*}
Taking the liminf and using $k_n/k_{n+1}\to q^{-1}$, we obtain
\begin{equation*}
\liminf_{k\to\infty}\frac{S_k\log\log k}{k^2}\ge q^{-2}\frac{c^2}{2}
\quad\text{a.s.}
\end{equation*}
Finally, letting $q\downarrow 1$ along a
 sequence
yields  (\ref{pro-4.1-a}). The proof is complete.
\end{proof}

\subsection{Proof of Proposition \ref{pro-4.2}}\label{sec-4.3}

 In this subsection, we prove Proposition \ref{pro-4.2} by applying the exponential change of measure method in \cite[Section~2.2]{DZ2010} to the block generating function derived from \eqref{2.3}, and then using the Borel-Cantelli lemma for independent blocks. We first prove two lemmas.

\begin{lem}\label{lem-4.4}
Let $(k_n)_{n\ge 1}$ be a strictly increasing sequence of positive integers, and $(\mu_n)_{n\ge 2}$ be a sequence of positive real numbers satisfying
\begin{equation*}
k_n\to\infty,\quad \mu_n\to\infty,\quad \mu_n=o(k_n),\quad
\frac{k_{n-1}\mu_n}{k_n}\to 0 \quad \mbox{as}\ n\to\infty.
\end{equation*}
For $n\ge 2$, define
\begin{align}\label{4.2-a}
B_n:=S_{k_n}-S_{k_{n-1}}=\sum_{i=k_{n-1}}^{k_n-1}T_i.
\end{align}
Then the following block analogue of Lemma \ref{lem-4.3} holds:
\begin{align}\label{4.3}
\log\mathbb{E}\Bigl[\bigl(\operatorname{sech}(\mu_n/k_n)\bigr)^{B_n}\Bigr]
=-2c\log\cosh(\mu_n/2)+o(\mu_n)=-c\mu_n+o(\mu_n) \quad \mbox{as}\ n\to\infty.
\end{align}
\end{lem}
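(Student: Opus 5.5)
By the independence of the increments $(T_i)_{i\ge1}$ in Lemma \ref{lem-2.1}, together with the single-increment formula \eqref{2.8-a} derived in the proof of Lemma \ref{lem-2.2}, the block generating function factorizes exactly. Writing $\theta_n:=\mu_n/k_n$, we have
\begin{equation*}
\log\mathbb{E}\Bigl[(\operatorname{sech}\theta_n)^{B_n}\Bigr]
=(k_n-k_{n-1})\log\operatorname{sech}\theta_n-\sum_{i=k_{n-1}}^{k_n-1}\log\Bigl(1+c\tanh\theta_n\tanh(i\theta_n/2)\Bigr).
\end{equation*}
The plan is then to repeat the computation of Lemma \ref{lem-4.3} with the lower summation index $k_{n-1}$ in place of $1$, and to show that the missing initial segment is negligible.

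First, the term $(k_n-k_{n-1})\log\operatorname{sech}\theta_n$ is $O(k_n\theta_n^2)=O(\mu_n^2/k_n)$, which is $o(\mu_n)$ because $\mu_n=o(k_n)$. Next, every argument satisfies $c\tanh\theta_n\tanh(i\theta_n/2)\le c\tanh\theta_n\to0$. Expanding $\log(1+x)=x+O(x^2)$ therefore gives
\begin{equation*}
c\tanh\theta_n\sum_{i=k_{n-1}}^{k_n-1}\tanh(i\theta_n/2)+O(k_n\theta_n^2).
\end{equation*}
Since $\tanh\theta_n=\theta_n+O(\theta_n^3)$ and the sum has at most $k_n$ terms bounded by $1$, we may replace $\tanh\theta_n$ by $\theta_n$ at a cost of $O(k_n\theta_n^3)=o(\mu_n)$.

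Now apply \eqref{e:er} with $h=\theta_n/2$ and $a=k_{n-1}$. This gives
\begin{equation*}
\theta_n\sum_{i=k_{n-1}}^{k_n-1}\tanh(i\theta_n/2)=2\int_{k_{n-1}\theta_n/2}^{\mu_n/2}\tanh u\,du+O(\theta_n).
\end{equation*}
The integral equals $\log\cosh(\mu_n/2)-\log\cosh(k_{n-1}\mu_n/(2k_n))$. The subtracted term is where the hypothesis $k_{n-1}\mu_n/k_n\to0$ enters: it forces $\log\cosh(k_{n-1}\mu_n/(2k_n))\to0$, so this term is $o(1)$ and in particular $o(\mu_n)$. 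Even without that hypothesis, the bound $\log\cosh x\le x$ would already give $O(k_{n-1}\mu_n/k_n)$, so this step is not really an obstacle.

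Collecting the pieces yields
\begin{equation*}
-2c\log\cosh(\mu_n/2)+O(\mu_n^2/k_n+\theta_n)+o(1),
\end{equation*}
and every error term is $o(\mu_n)$. Finally, $\log\cosh(\mu_n/2)=\mu_n/2+O(1)$ because $\mu_n\to\infty$, which gives the second equality in \eqref{4.3}. The only care needed is bookkeeping the error terms uniformly in $i$. The Taylor-remainder bound is uniform because the arguments are uniformly small, so I expect no genuine obstacle beyond checking that each error term is $o(\mu_n)$.
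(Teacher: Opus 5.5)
Your proposal is correct and follows the paper's proof essentially step for step: the same exact factorization via \eqref{2.8-a}, the same Taylor expansion of $\log(1+x)$ with $O(\mu_n^2/k_n)$ errors, the same Riemann-sum bound \eqref{e:er} with $h=\theta_n/2$ and $a=k_{n-1}$, and the hypothesis $k_{n-1}\mu_n/k_n\to0$ to remove the lower limit of the integral. One small correction to your aside: the bound $\log\cosh x\le x$ only gives $O(k_{n-1}\mu_n/k_n)$, which is $o(\mu_n)$ only when $k_{n-1}=o(k_n)$ (for $k_n=2^n$ the subtracted term is of order $\mu_n$ and the conclusion fails), so some hypothesis is genuinely needed at that step, although the weaker condition $k_{n-1}/k_n\to0$ would already suffice.
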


\begin{proof}
Let $\theta_n:=\mu_n/k_n$. By Lemma \ref{lem-2.1},  \eqref{2.8-a} and \eqref{4.2-a}, we have
\begin{align*}
\log\mathbb{E}\Bigl[\bigl(\operatorname{sech}\theta_n\bigr)^{B_n}\Bigr]
&=(k_n-k_{n-1})\log\operatorname{sech}\theta_n-\sum_{i=k_{n-1}}^{k_n-1}\log\Bigl(1+c\tanh\theta_n\tanh(i\theta_n/2)\Bigr).
\end{align*}
The first term is $O(\mu_n^2/k_n)=o(\mu_n)$  since $\log \operatorname{sech}\theta_n=-\theta_n^2/2+O(\theta_n^4)$ as $n\to\infty$.
Hence, by \eqref{e:er} and the Taylor expansion over the block $k_{n-1}\le i<k_n$ as in Lemma \ref{lem-4.3}, we get
\begin{align*}
&\sum_{i=k_{n-1}}^{k_n-1}
\log\Bigl(1+c\tanh\theta_n\tanh(i\theta_n/2)\Bigr)\\
&=c\theta_n\sum_{i=k_{n-1}}^{k_n-1}\tanh(i\theta_n/2)+O(\frac{\mu^2_n}{k_n})\\
&=2c\int_{k_{n-1}\mu_n/2k_n}^{\mu_n/2}\tanh u\,du+O(\frac{\mu^2_n}{k_n}+\frac{\mu_n}{k_n})\\
&=2c\log\cosh(\mu_n/2)+o(\mu_n),
\end{align*}
where in the last step we used $k_{n-1}\mu_n/k_n\to 0$.
 It follows that the first equality in (\ref{4.3}) holds. Since
$\log\cosh(\mu_n/2)=\mu_n/2+o(\mu_n)$
as $n\to\infty$, the second equality in (\ref{4.3}) holds too. The proof is complete.
\end{proof}

\smallskip

Suppose the assumptions of Lemma \ref{lem-4.4} are in force. We use the notation of Lemma \ref{lem-4.4}. For $n\ge 2$ and $\mu>0$, define
\begin{equation}\label{4.4}
H_n(\mu):=\log\mathbb{E}\Bigl[\bigl(\operatorname{sech}(\mu/k_n)\bigr)^{B_n}\Bigr].
\end{equation}
Following the standard exponential change of measure method
(see,~e.g., \cite[Section~2.2]{DZ2010}), define the tilted measure $Q_n$ by
\begin{align}\label{4.5}
\frac{dQ_n}{d\mathbb{P}}&:=\frac{(\operatorname{sech}(\mu_n/k_n))^{B_n}}{e^{H_n(\mu_n)}}.
\end{align}

\begin{lem}\label{lem-4.5}
Fix $\varepsilon\in(0,1/2)$ and set
\begin{equation}\label{4.6}
t_n:=\frac{ck_n^2}{\mu_n}.
\end{equation}
Then there exists $\gamma_\varepsilon>0$ such that, for all large $n$,
\begin{equation}\label{4.7}
Q_n(B_n\le (1-\varepsilon)t_n)\le e^{-\gamma_\varepsilon\mu_n},\qquad Q_n(B_n\ge (1+\varepsilon)t_n)\le e^{-\gamma_\varepsilon\mu_n}.
\end{equation}
In particular,
\begin{equation}\label{4.8}
Q_n\bigl((1-\varepsilon)t_n\le B_n\le (1+\varepsilon)t_n\bigr)\to 1.
\end{equation}
\end{lem}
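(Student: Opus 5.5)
The plan is to use exponential Chebyshev bounds under $Q_n$, with the cumulant function $H_n$ from \eqref{4.4}. Write $\lambda(\mu):=-\log\operatorname{sech}(\mu/k_n)=\log\cosh(\mu/k_n)$, so that $(\operatorname{sech}(\mu/k_n))^{B_n}=e^{-\lambda(\mu)B_n}$. For $s$ with $\mu_n+s>0$, the definition \eqref{4.5} gives
\begin{equation*}
\mathbb{E}_{Q_n}\bigl[e^{-(\lambda(\mu_n+s)-\lambda(\mu_n))B_n}\bigr]=\exp\bigl(H_n(\mu_n+s)-H_n(\mu_n)\bigr).
\end{equation*}
I will take $s=\delta\mu_n$ for a small fixed $\delta\in(0,1)$, once with a plus sign and once with a minus sign. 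Since $\mu_n/k_n\to0$, we have
\begin{equation*}
\lambda(\mu_n\pm\delta\mu_n)-\lambda(\mu_n)=\frac{(\pm2\delta+\delta^2)\mu_n^2}{2k_n^2}(1+o(1)).
\end{equation*}

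The first step is to apply Lemma \ref{lem-4.4} to the sequences $(1\pm\delta)\mu_n$. These still satisfy the hypotheses $\mu\to\infty$, $\mu=o(k_n)$ and $k_{n-1}\mu/k_n\to0$. One small point: the lemma's conclusion \eqref{4.3} is stated with the same $k_n$ in both the block and the argument $\mu/k_n$. That matches $H_n$ exactly, so the lemma applies verbatim. We obtain $H_n((1\pm\delta)\mu_n)=-c(1\pm\delta)\mu_n+o(\mu_n)$, and therefore $H_n((1\pm\delta)\mu_n)-H_n(\mu_n)=\mp c\delta\mu_n+o(\mu_n)$.

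For the lower tail I use the parameter $+\delta$. The exponent $\eta:=\lambda((1+\delta)\mu_n)-\lambda(\mu_n)>0$, and by Markov's inequality
\begin{equation*}
Q_n(B_n\le(1-\varepsilon)t_n)\le e^{\eta(1-\varepsilon)t_n}\,e^{H_n((1+\delta)\mu_n)-H_n(\mu_n)}.
\end{equation*}
Using $t_n=ck_n^2/\mu_n$, we get $\eta(1-\varepsilon)t_n=(1-\varepsilon)c(\delta+\delta^2/2)\mu_n(1+o(1))$. The total exponent is then
\begin{equation*}
c\mu_n\bigl[(1-\varepsilon)(\delta+\delta^2/2)-\delta\bigr]+o(\mu_n)=c\mu_n\delta\bigl[-\varepsilon+(1-\varepsilon)\delta/2\bigr]+o(\mu_n).
\end{equation*}
This is negative once $\delta<\varepsilon$, say $\delta=\varepsilon/2$.

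For the upper tail I use the parameter $-\delta$. Now $\eta':=\lambda(\mu_n)-\lambda((1-\delta)\mu_n)>0$ and
\begin{equation*}
Q_n(B_n\ge(1+\varepsilon)t_n)\le e^{-\eta'(1+\varepsilon)t_n}\,\mathbb{E}_{Q_n}\bigl[e^{\eta'B_n}\bigr].
\end{equation*}
The exponent becomes
\begin{equation*}
c\mu_n\bigl[-(1+\varepsilon)(\delta-\delta^2/2)+\delta\bigr]+o(\mu_n)=c\mu_n\delta\bigl[-\varepsilon+(1+\varepsilon)\delta/2\bigr]+o(\mu_n),
\end{equation*}
which is negative for $\delta=\varepsilon/2$ since $\varepsilon<1/2$. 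Taking $\gamma_\varepsilon$ slightly smaller than $\min$ of the two resulting constants absorbs the $o(\mu_n)$ terms for large $n$ and gives \eqref{4.7}. Then \eqref{4.8} follows because $\mu_n\to\infty$.

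The main obstacle is keeping the error terms honest. Lemma \ref{lem-4.4} only gives $o(\mu_n)$ accuracy, while the gain from the tilt is of exact order $\varepsilon\delta\mu_n$. This works only because $\delta$ is fixed and not tending to zero, so the errors are negligible. The same care is needed for the quadratic expansion of $\log\cosh$: relative to $\mu_n$ its error is $O(\mu_n^3/k_n^2)=o(1)$. No second-order information about $H_n$ is required.
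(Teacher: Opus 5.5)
Your proposal is correct and is essentially the paper's proof: your Markov bound under $Q_n$ for $e^{\mp\eta B_n}$ is the same inequality the paper gets by comparing $Q_n$ with the re-tilted measures $Q_n^{\pm}$ at parameters $(1\pm\varepsilon)\mu_n$. In both arguments Lemma~\ref{lem-4.4} supplies $H_n((1\pm\delta)\mu_n)-H_n(\mu_n)=\mp c\delta\mu_n+o(\mu_n)$. The differences are cosmetic: you tilt by $\delta=\varepsilon/2$ where the paper takes $\delta=\varepsilon$ (giving exponents $-\tfrac c2(\varepsilon^2\pm\varepsilon^3)\mu_n$), and in your closing remark the absolute error $O(\mu_n^3/k_n^2)$ from the $\log\cosh$ expansion is $o(\mu_n)$ rather than necessarily $o(1)$, which is all the argument needs.
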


\begin{proof}
We first treat the left tail. Let $\mu_n^+:=(1+\varepsilon)\mu_n$, and $Q_n^+$ be the tilted measure defined by
\begin{equation}\label{4.9}
\frac{dQ_n^+}{d\mathbb{P}}:=\frac{(\operatorname{sech}(\mu_n^+/k_n))^{B_n}}{e^{H_n(\mu_n^+)}}.
\end{equation}
By \eqref{4.5} and \eqref{4.9}, we have
\begin{align*}
\frac{dQ_n}{dQ_n^+}=\frac{dQ_n/d\mathbb{P}}{dQ_n^+/d\mathbb{P}}
&=e^{H_n(\mu_n^+)-H_n(\mu_n)}\left(\frac{\operatorname{sech}(\mu_n/k_n)}{\operatorname{sech}
(\mu_n^+/k_n)}\right)^{B_n}=e^{H_n(\mu_n^+)-H_n(\mu_n)}\left(\frac{\cosh(\mu_n^+/k_n)}{\cosh(\mu_n/k_n)}\right)^{B_n}.
\end{align*}
Since $\mu_n^+>\mu_n$ and $\cosh$ is increasing on $[0,\infty)$, for any $x>0$, we have
\begin{equation}\label{4.10}
\begin{aligned}
Q_n(B_n\le x)&=e^{H_n(\mu_n^+)-H_n(\mu_n)}\mathbb{E}_{Q_n^+}\!\left[\Bigl(\frac{\cosh(\mu_n^+/k_n)}{\cosh(\mu_n/k_n)}\Bigr)^{B_n}1_{\{B_n\le x\}}\right]\\
&\le\exp\!\left(H_n(\mu_n^+)-H_n(\mu_n)+x\bigl[\log\cosh(\mu_n^+/k_n)-\log\cosh(\mu_n/k_n)\bigr]\right).
\end{aligned}
\end{equation}
Since $\mu_n^+$ also satisfies the assumptions of Lemma \ref{lem-4.4}, by \eqref{4.3} and \eqref{4.4}, we have
\begin{equation}\label{4.11}
H_n(\mu_n^+)-H_n(\mu_n)=-c\mu_n^+ + c\mu_n+o(\mu_n)=-c\varepsilon\mu_n+o(\mu_n).
\end{equation}
Since $\mu_n=o(k_n)$ and $\log\cosh x=x^2/2+o(x^2)$ as $x\to0$, we have
\begin{align}\label{4.12}
\log\cosh(\mu_n^+/k_n)-\log\cosh(\mu_n/k_n)
=\frac{(\mu_n^+)^2-\mu_n^2}{2k_n^2}+o(\mu_n^2/k_n^2)
=\frac{(2\varepsilon+\varepsilon^2)\mu_n^2}{2k_n^2}+o(\mu_n^2/k_n^2).
\end{align}
Taking $x=(1-\varepsilon)t_n=(1-\varepsilon)ck_n^2/\mu_n$ in \eqref{4.10}, and using \eqref{4.6}, \eqref{4.11} and \eqref{4.12}, we have
\begin{equation}\label{4.13-a}
\begin{aligned}
&Q_n(B_n\le (1-\varepsilon)t_n)
\le \exp\!\left(-c\varepsilon\mu_n+(1-\varepsilon)c\mu_n\Bigl(\varepsilon+\frac{\varepsilon^2}{2}\Bigr)+o(\mu_n)\right)\\
&=\exp\!\Bigl(-\frac{c}{2}(\varepsilon^2+\varepsilon^3)\mu_n+o(\mu_n)\Bigr).
\end{aligned}
\end{equation}
Now we similarly consider the right tail.
Let $\mu_n^-:=(1-\varepsilon)\mu_n$, and $Q_n^-$ be the tilted measure defined by
\begin{equation}\label{4.13}
\frac{dQ_n^-}{d\mathbb{P}}:=\frac{(\operatorname{sech}(\mu_n^-/k_n))^{B_n}}{e^{H_n(\mu_n^-)}}.
\end{equation}
By \eqref{4.5} and \eqref{4.13}, we have
\begin{equation*}
\frac{dQ_n}{dQ_n^-}=\frac{dQ_n/d\mathbb{P}}{dQ_n^-/d\mathbb{P}}=e^{H_n(\mu_n^-)-H_n(\mu_n)}\left(\frac{\cosh(\mu_n^-/k_n)}{\cosh(\mu_n/k_n)}\right)^{B_n}.
\end{equation*}
Since $\mu_n^-<\mu_n$ and $\cosh$ is increasing on $[0,\infty)$, for any $x>0$, we have
\begin{equation}\label{4.14}
\begin{aligned}
&Q_n(B_n\ge x)=e^{H_n(\mu_n^-)-H_n(\mu_n)}\mathbb{E}_{Q_n^-}\!\left[\Bigl(\frac{\cosh(\mu_n^-/k_n)}{\cosh(\mu_n/k_n)}\Bigr)^{B_n}1_{\{B_n\ge x\}}\right]\\
&\le\exp\!\left(H_n(\mu_n^-)-H_n(\mu_n)+x\bigl[\log\cosh(\mu_n^-/k_n)-\log\cosh(\mu_n/k_n)\bigr]\right).
\end{aligned}
\end{equation}
Since $\mu_n^-$ also satisfies the assumptions of Lemma \ref{lem-4.4}, by \eqref{4.3} and \eqref{4.4}, we have
\begin{equation}\label{4.15}
H_n(\mu_n^-)-H_n(\mu_n)=-c\mu_n^- + c\mu_n+o(\mu_n)=c\varepsilon\mu_n+o(\mu_n).
\end{equation}
Since $\mu_n=o(k_n)$ and $\log\cosh x=x^2/2+o(x^2)$ as $x\to0$, we have
\begin{align} \label{4.16}
&\log\cosh(\mu_n^-/k_n)-\log\cosh(\mu_n/k_n)
=\frac{(\mu_n^-)^2-\mu_n^2}{2k_n^2}+o(\mu_n^2/k_n^2)\nonumber\\
&=\frac{(-2\varepsilon+\varepsilon^2)\mu_n^2}{2k_n^2}+o(\mu_n^2/k_n^2).
\end{align}
Taking $x=(1+\varepsilon)t_n=(1+\varepsilon)ck_n^2/\mu_n$ in \eqref{4.14}, and using \eqref{4.6}, \eqref{4.15} and \eqref{4.16}, we get
\begin{equation}\label{4.16-a}
\begin{aligned}
&Q_n(B_n\ge (1+\varepsilon)t_n)
\le \exp\!\left(c\varepsilon\mu_n-(1+\varepsilon)c\mu_n\Bigl(\varepsilon-\frac{\varepsilon^2}{2}\Bigr)+o(\mu_n)\right)\\
&=\exp\!\Bigl(-\frac{c}{2}(\varepsilon^2-\varepsilon^3)\mu_n+o(\mu_n)\Bigr).
\end{aligned}
\end{equation}

Since $\varepsilon\in(0,1/2)$, both $\varepsilon^2+\varepsilon^3$ and $\varepsilon^2-\varepsilon^3$ are positive. Choose
\begin{equation*}
\gamma_\varepsilon:= \frac{c}{4}\left((\varepsilon^2+\varepsilon^3)\wedge (\varepsilon^2-\varepsilon^3)\right).
\end{equation*}
Then  by \eqref{4.13-a}, \eqref{4.14} and \eqref{4.16-a}, we know that  \eqref{4.7} holds for all sufficiently large $n$.  Since  $\mu_n\to\infty$,
 by \eqref{4.7}, we have
$$
Q_n\bigl((1-\varepsilon)t_n<B_n< (1+\varepsilon)t_n\bigr)\to 1,
$$
which implies \eqref{4.8}.
The proof is complete.
\end{proof}

\smallskip

\begin{proof}[Proof of Proposition \ref{pro-4.2}]
Fix $A>c^2/2$. Let
\begin{equation*}
k_n:=n^n\quad(n\ge 1),\qquad \ell_n:=\log\log k_n\quad(n\ge 2).
\end{equation*}
Then $\ell_n=\log n+o(\log n)$. For $n\ge 2$, define
\begin{equation*}
\mu_n:=\frac{c\ell_n}{A},\qquad B_n:=S_{k_n}-S_{k_{n-1}}.
\end{equation*}
Then $\mu_n\to\infty$ and $\mu_n=o(k_n)$. Moreover,
\begin{equation*}
\frac{k_{n-1}}{k_n}\sim\frac{(n-1)^{n-1}}{n^n}=\frac{1}{n}\Bigl(1-\frac{1}{n}\Bigr)^{n-1}\sim \frac{e^{-1}}{n}.
\end{equation*}
Since $\mu_n\sim\frac{c}{A}\cdot\log n$, it follows that
\begin{equation*}
\frac{k_{n-1}\mu_n}{k_n}\sim \frac{ce^{-1}}{A}\frac{\log n}{n}\longrightarrow 0.
\end{equation*}
Thus Lemmas \ref{lem-4.4} and \ref{lem-4.5} apply.

Let $Q_n$ be the tilted measure
 defined by \eqref{4.5}.
Fix $\varepsilon\in(0,1/2)$ small enough such that
\begin{equation*}
\frac{(1+\varepsilon)c^2}{2A}<1.
\end{equation*}
For $n\geq 2$, define
\begin{equation}\label{4.17}
F_n:=\left\{(1-\varepsilon)\frac{Ak_n^2}{\ell_n}\le B_n\le (1+\varepsilon)\frac{Ak_n^2}{\ell_n}\right\}.
\end{equation}
By \eqref{4.6} and the choice $\mu_n=c\ell_n / A$, we have
\begin{equation*}
t_n=\frac{ck_n^2}{\mu_n}=\frac{Ak_n^2}{\ell_n}.
\end{equation*}
Thus \eqref{4.8} and \eqref{4.17} imply that
$Q_n(F_n)\to 1$ and  $\log Q_n(F_n)=o(1)=o(\ell_n)$.
Moreover, by \eqref{4.3} and \eqref{4.4},
$H_n(\mu_n)=-c\mu_n+o(\mu_n)=-\frac{c^2}{A}\ell_n+o(\ell_n)$.
Since $\mu_n=o(k_n)$,
$\log\cosh(\mu_n/k_n)=\mu_n^2/(2k_n^2)+o(\mu_n^2/k_n^2)$.
Then by \eqref{4.5} and \eqref{4.17}, setting $\alpha:=(1+\varepsilon)c^2/(2A)<1$, we obtain
\begin{align*}
&\log\mathbb{P}(F_n)
=
\log\left(e^{H_n(\mu_n)}\mathbb{E}_{Q_n}\!\left[(\cosh(\mu_n/k_n))^{B_n}1_{F_n}\right]\right)\\
& \ge \log\left(e^{H_n(\mu_n)}(\cosh(\mu_n/k_n))^{(1-\varepsilon)Ak_n^2/\ell_n}Q_n(F_n)\right)\\
&=
H_n(\mu_n) +(1-\varepsilon)\frac{Ak_n^2}{\ell_n}\log\cosh(\mu_n/k_n)+\log Q_n(F_n)\\
&\ge -\frac{c^2}{A}\ell_n+(1-\varepsilon)\frac{Ak_n^2}{\ell_n}\cdot\frac{\mu_n^2}{2k_n^2}+o(\ell_n)\\
&=-\frac{(1+\varepsilon)c^2}{2A}\ell_n+o(\ell_n)\\
&=-\alpha\ell_n+o(\ell_n)
=(-\alpha+o(1))\log n,
\end{align*}
 which implies that
\begin{equation*}
\mathbb{P}(F_n)\ge n^{-\alpha+o(1)}.
\end{equation*}
Choose $\delta>0$ such that $\alpha+\delta<1$,  and for all large $n$, $\mathbb{P}(F_n)\ge n^{-(\alpha+\delta)}$.  Thus $\sum_n\mathbb{P}(F_n)=\infty$. Moreover, $F_n$ is measurable with respect to $\sigma(T_i:k_{n-1}\le i<k_n)$. Since the
intervals $[k_{n-1},k_n)$ are disjoint and the increments $(T_i)_{i\ge 1}$ are independent, the events $(F_n)_{n\ge 2}$ are
independent. By the Borel-Cantelli lemma,
 we get
\begin{align}\label{4.19}
\mathbb{P}(F_n\ {\rm i.o.})=1.
\end{align}

On the other hand, \eqref{pro-3.1-a} implies that almost surely for all sufficiently large $n$,
\begin{equation*}
S_{k_{n-1}}\le \Bigl(\frac{2}{\pi^2}+\varepsilon\Bigr) k_{n-1}^2\log\log k_{n-1}.
\end{equation*}
Moreover,
\begin{equation*}
\frac{k_{n-1}^2\log\log k_{n-1}}{k_n^2/\ell_n}\le \Bigl(\frac{k_{n-1}}{k_n}\Bigr)^2\ell_n^2\longrightarrow 0.
\end{equation*}
Thus
\begin{equation}\label{4.20}
S_{k_{n-1}}=o(k_n^2/\ell_n) \quad {\rm a.s.}
\end{equation}

 By \eqref{4.17}-\eqref{4.20}, we get that for infinitely many $n$,
\begin{equation*}
S_{k_n}=S_{k_{n-1}}+B_n\le (1+2\varepsilon)\frac{Ak_n^2}{\ell_n} \quad {\rm a.s.,}
\end{equation*}
which implies
\begin{equation*}
\liminf_{n\to\infty}\frac{S_{k_n}\ell_n}{k_n^2}\le (1+2\varepsilon)A\quad {\rm a.s.}
\end{equation*}
Since $(k_n)_{n\ge1}$ is a subsequence, we obtain
\begin{equation*}
\liminf_{k\to\infty}\frac{S_k\log\log k}{k^2}\le (1+2\varepsilon)A\quad {\rm a.s.}
\end{equation*}
Finally, letting $\varepsilon\downarrow0$ and then $A\downarrow c^2/2$ along sequences,
we get  \eqref{pro-4.2-a}. The proof is complete.
\end{proof}

\bigskip

 \noindent {\bf\large Acknowledgments}\  This work was supported by
 National Natural Science Foundation of China (Grant Nos. 12171335, 12471139, 12301603) and  Simons Foundation (\#429343).

\end{document}